\documentclass[12pt]{amsart}
\usepackage{amsthm,amsmath}
\usepackage{amssymb}
\input xypic

\newcommand{\CC}{{\mathbb{C}}}

\newcommand{\EE}{{\mathbb{E}}}
\newcommand{\PP}{{\mathbb{P}}}
\newcommand{\QQ}{{\mathbb{Q}}}

\newcommand{\ZZ}{{\mathbb{Z}}}

\newcommand{\calO}{{\mathcal O}}

\newcommand{\calA}{{\mathcal A}}

\newcommand{\calM}{{\mathcal M}}

\newcommand{\calP}{{\mathcal P}}

\newcommand{\calX}{{\mathcal X}}

\newcommand{\op}{\operatorname}

\newcommand{\Perf}{{\calA_g^{\op {Perf}}}}

\newcommand{\Pic}{\op{Pic}}
\newcommand{\Jac}{\op{Jac}}

\newcommand\oMg{\overline{\calM}_g}
\newcommand\oAg{\overline{\calA}_g}

\newcommand\T{\Theta}
\newcommand\ud{{\underline{d}}}
\theoremstyle{plain}
\newtheorem{thm}{Theorem}[section]

\newtheorem{prop}[thm]{Proposition}

\theoremstyle{definition}

\newtheorem{rem}[thm]{Remark}
\newtheorem{ex}[thm]{Example}
\begin{document}
\title[The universal semiabelian variety]{The zero section of the universal semiabelian variety, and the double ramification cycle}
\author{Samuel Grushevsky}
\address{Mathematics Department, Stony Brook University, Stony Brook, NY 11790-3651, USA}
\email{sam@math.sunysb.edu}
\thanks{Research of the first author supported in part by National Science Foundation under the grants DMS-10-53313/12-01369.}
\author{Dmitry Zakharov}
\address{Mathematics Department, Stony Brook University, Stony Brook, NY 11790-3651, USA}
\email{dvzakharov@gmail.com}
\begin{abstract}
We study the Chow ring of the boundary of the partial compactification of the universal family of principally polarized abelian varieties. We describe the subring generated by divisor classes, and compute the class of the partial compactification of the universal zero section, which turns out to lie in this subring. Our formula extends the results for the zero section of the universal uncompactified family.

The partial compactification of the universal family of ppav can be thought of as the first two boundary strata in any toroidal compactification of $\calA_g$. Our formula provides a first step in a program to understand the Chow groups of $\overline{\calA}_g$, especially of the perfect cone compactification, by induction on genus. By restricting to the image of $\calM_g$ under the Torelli map, our results extend the results of Hain on the double ramification cycle, answering Eliashberg's question.
\end{abstract}
\maketitle

\section*{Introduction}
We are mainly interested in the Chow and cohomology groups of (compactified) moduli spaces of principally polarized abelian varieties (ppav). The tautological ring of the moduli space of ppav $\calA_g$ is defined as the subring $R^*(\calA_g)\subset A^*(\calA_g)$ of the Chow ring (or of cohomology ring $RH^*(\calA_g)\subset H^*(\calA_g)$) generated by the Chern classes $\lambda_i:=c_i(\EE)$ of the rank $g$ Hodge bundle $\EE\to\calA_g$, with fiber over $A$ being $H^{1,0}(\CC)$. Unlike the case of curves, this tautological ring is known completely. For a suitable toroidal compactification $\oAg$ van der Geer \cite{vdgeercycles} proved that $RH^*(\oAg)$ is generated by the $\lambda_i$ with the only relations being the homogeneous degree pieces of the basic relation
\begin{equation}\label{basic}
(1+\lambda_1+\cdots+\lambda_g)(1-\lambda_1+\cdots+(-1)^g\lambda_g)=1.
\end{equation}
In cohomology this relation follows from  the triviality of $\EE\oplus\overline{\EE}$, the total space of the bundle of first cohomology. Esnault and Viehweg \cite{esvi} proved the much more delicate result that it also holds in the Chow ring of $\oAg$, which implies that $R^*(\oAg)=RH^*(\oAg)$.

Furthermore, van der Geer \cite{vdgeercycles} also proved that the tautological ring $R^*(\calA_g)=RH^*(\calA_g)$ is obtained from $R^*(\oAg)$ by imposing one more relation $\lambda_g=0$. Thus in $R^*(\oAg)$ the class $\lambda_g$ can be represented by a cycle supported on the boundary, and it is a natural question to find a suitable representative for it. A lot of progress on this was made by Ekedahl and van der Geer \cite{ekvdgcycles},\cite{ekvdgorder}. In particular, in characteristic $p$ suitable cycles were constructed, but over $\CC$ this question remains open. Note that in characteristic zero Keel and Sadun \cite{kesa} proved Oort's conjecture that $\calA_g$ does  not have complete subvarieties of codimension $g$.

One naturally defined geometric locus in $\oAg$ is the locus $\delta_g$, the closure of the locus of trivial extensions of semiabelic varieties of torus rank one. This locus was introduced and studied by Ekedahl and van der Geer \cite{ekvdgcycles}. We denote $\calA_g'\supset\calA_g$ Mumford's partial compactification, obtained by adding semiabelic varieties of torus rank one (compactifications of $\CC^*$-extensions of $(g-1)$-dimensional ppav). The boundary $\calA_g'\setminus\calA_g$ is then the universal family of $(g-1)$-dimensional Kummer varieties (quotients of ppav by the $-1$ involution), and admits the zero section. The class $\delta_g$ is defined to be the class of the closure of the image of the zero section in a suitable toroidal compactification $\oAg$ (recall that all toroidal compactifications contain $\calA_g'$). Ekedahl and van der Geer show that on $\oAg$ the class $\lambda_g$ is equal to $(-1)^g\zeta(1-2g)\delta_g$ up to classes supported deeper in the boundary, on $\oAg\setminus\calA_g'$, in other words that on $\calA_g'$ the class $\lambda_g$ is proportional to $\delta_g$. Thus understanding the class $\delta_g$ could lead to finding an explicit geometric cycle representing $\lambda_g$ in characteristic zero. A study of the locus $\delta_g$ is also natural since Shepherd-Barron \cite{shepherdbarron} showed that in the perfect cone toroidal compactification $\Perf$ the normalization of the closure of the zero section is equal to $\calA_{g-1}^{\op{Perf}}$. Thus a full understanding of the class $\delta_g$ could provide an inductive approach for understanding the cohomology of the perfect cone compactification, for example addressing the conjecture of Erdenberger, Hulek, and the first author \cite{ergrhu2} on intersection numbers of divisors on $\calA_{g-1}^{\op{Perf}}$. We note that for $g\le 3$ the locus $\delta_g$ was fully described, and its class in $\overline{\calA}_g$ was computed completely by van der Geer in \cite{vdgeerchowa3}, but nothing was previously known for higher $g$.

Denote $\calX_g\to\calA_g$ the universal family of ppav, and by $\calX_g'\to\calA_g'$ its partial compactification (note that the existence of a universal compactified family $\overline{\calX}_g$ over a full toroidal compactification $\overline{\calA}_g$ is only known for the second Voronoi toroidal compactification by the work of Alexeev \cite{alexeev}). Our main result is the computation of the class of the closure of the zero section $z_g':\calA_g'\to\calX_g'$, which turns out to be a polynomial in divisor classes and a natural codimension two ``gluing locus'' in $\calX_g'$, see Theorem \ref{thm:main}. Moreover, we prove that the divisor classes and this codimension two class generate a certain geometric subring of the Chow ring of $\calX_g'$, see Theorem \ref{thm:expressible}. We also describe the algebraic cohomology of the universal family $\calX_g\times_{\calA_g}\calX_g$ of products of ppav, see Theorem \ref{thm:subring}. Since $\calX_{g-1}$ is a cover of the boundary of $\calA_g'$, our results mean that on $\overline{\calA}_g$ we compute the class $\delta_g$ up to the stratum parameterizing semiabelic varieties of torus rank two.

\smallskip
Our results also have consequences for the moduli space of curves $\calM_g$. Let $\overline{\calM}_{g,n}$ denote the Deligne--Mumford compactification by stable marked curves, and let $\calM_{g,n}^{ct}$ denote the partial compactification by stable curves of compact type. Recall that the tautological rings of $R^*(\overline{\calM}_{g,n})\subset A^*(\overline{\calM}_{g,n})$ are defined as the smallest collection of subrings closed under the natural gluing and forgetful maps, and the tautological rings of $\calM_{g,n}$ and $\calM_{g,n}^{ct}$ are defined by restriction. The tautological ring of $\calM_g$ is generated by the Mumford-Morita-Miller classes $\kappa_i$, while the tautological rings of the compactifications contain additional classes coming from the boundary.
Faber's conjecture \cite{faberconjecture} states that the tautological ring of $\calM_g$ (respectively of $\calM_{g,n}^{ct}$ and $\overline{\calM}_{g,n}$) is Gorenstein with socle in dimension $g-2$ (respectively $2g-3+n$ and $3g-3+n$) --- such rings are also called Poincar\'e duality rings. That is to say that the tautological ring is zero above the socle dimension, one-dimensional in the socle dimension, and has perfect pairing to the socle dimension. The vanishing and one-dimensionality are known for all tautological rings (see \cite{loo},\cite{faberonedim},\cite{ionel},\cite{grva}, the perfect pairing statement is currently not known to hold for $\calM_g$ when $g\geq 24$, for $\calM_g^{ct}$ when $g\geq 6$, and was
recently shown to be false for $\overline{\calM}_{2,n}$ for a suitable $n$ by Petersen and Tommasi \cite{2012PetersenTommasi}. We refer to the work of  Pandharipande, Pixton and Zvonkine \cite{2013PandharipandePixtonZvonkine} for the recent progress on understanding the relations in the tautological ring, and further review of the state of the art.

Another interesting question is whether classes of various naturally defined geometric loci in $\calM_g$ are tautological, and whether the classes of their closures in $\oMg$ are tautological. For tautological classes on $\oMg$ that vanish in $\calM_g$ one can also ask to find explicit geometric representatives --- in particular this question is of interest for the class $\lambda_g$, which is a pullback from the moduli space of ppav.

Our results on $\calX_g'$ yield a further understanding of a natural codimension $g$ class on $\overline{\calM}_{g,n}$: the two-branch-point locus, also called the double ramification cycle, which is formally defined and discussed in detail by Faber, Shadrin, and Zvonkine \cite{FSZ} (while it seems to have been first considered by Ionel, see \cite{ionel}).
It is defined as the closure of the locus of $(X,p_1,\ldots,p_n)\in\calM_{g,n}$ such that a linear combination $\sum d_i p_i$ is a principal divisor on $X$ (for some fixed $d_i\in\ZZ, \sum d_i=0$). This is a natural ``double Hurwitz'' locus of curves admitting a map to $\PP^1$ with prescribed preimages and ramification at 0 and $\infty$. Its class is also of interest in Gromov--Witten theory (see \cite{FSZ} for details), and the question of computing it is due to Eliashberg. The class of the closure of this locus in $\calM_{g,n}^{ct}$ was recently computed by Hain \cite{hainnormal}. We extend his computation further into the boundary of $\overline{\calM}_{g,n}$, to the open subset parameterizing curves with at most one non-separating node, by pulling back the class of the locus $\delta_g$.

\smallskip
While for $\oMg$ the classes of the boundary divisors, and possibly the classes of the closures of various geometric loci, are tautological by the work of Faber and Pandharipande \cite{fapabntaut}, for $\oAg$ already the boundary divisor(s) are non-tautological (as their classes are clearly not proportional to $\lambda_1$). Thus defining a suitable ``extended'' tautological subring of $A^*(\oAg,\QQ)$ is a natural central further question to study; one could hope that such a ring would be defined geometrically, and would contain the classes of geometrically defined loci. Some results in this direction were obtained by Hulek and the first author in \cite{grhu1}, but the situation is far from clear, and studying natural geometric loci in $\oAg$ is thus of particular interest.

Note also that since the Deligne-Mumford compactification $\oMg$ admits a morphism to the second Voronoi (by the work of Namikawa \cite{namikawabook}) and to the perfect cone (by the work of Alexeev and Brunyate \cite{albr}) toroidal compactifications of $\calA_g$ (and the image is the same, landing in the matroidal locus by Melo and Viviani \cite{mevi}), restricting a geometric cycle representing $\lambda_g$ on $\oAg$ to the image of the Torelli map would allow one to relate the tautological rings of $\oMg$ and $\calM_g^{ct}$ (which is the preimage of $\calA_g$ under the Torelli map, as a stack), and perhaps to obtain a direct computational proof of the $\lambda_g$-conjecture (proven by Faber and Pandharipande \cite{fapalambdag}, to which we also refer for a discussion).

\section{Statement of results}
The principal result of our paper is the computation of the class of the closure of the zero section of the universal abelian variety in the partial compactification $\calX_g'\to\calA_g'$.

\begin{thm}\label{thm:main}
Let $\calX_g'\to\calA_g'$ denote the partial compactification of the universal family of ppav $\calX_g\to\calA_g$, let $z_g:\calA_g\to\calX_g$ denote the zero section, let $z_g':\calA_g'\to\calX_g'$ denote the closure of the zero section in the partial compactification, and let $Z_g'$ denote its class in $A^{g}(\calX_g',\QQ)$. Then we have
\begin{equation}\label{eqn:main}
  Z_g'=\displaystyle\sum_{a+b+2c=g} \alpha_{a,b,c} (\Theta-D/8)^a D^b(\Delta-2\Theta D)^c,
\end{equation}
where the positive coefficients $\alpha_{a,b,c}$ are given by
\begin{equation}\label{eqn:beta}
  \alpha_{a,b,c}= \frac{(-1)^{b+c+1}(2^{-b-c}-2^{1-3b-3c})(2a+2b+2c-1)!!B_{2b+2c}}{(2a+2c-1)!!(2b+2c-1)!!a!b!c!}.
\end{equation}
Here $B_n$ denotes the $n$-th Bernoulli number, $\Theta\in A^1(\calX_g',\QQ)$ denotes the class of the universal theta divisor trivialized along the zero section, $D\in A^1(\calX_g',\QQ)$ denotes the class of the boundary $\calX_g'\setminus\calX_g$, and $\Delta\in A^2(\calX_g',\QQ)$ denotes the class of the gluing locus within $D$, where the $0$ and $\infty$ sections of the universal Poincar\'e bundle that is the total space of $\calX_g'\setminus\calX_g$ are identified (all considered non-stacky, see below for details).
\end{thm}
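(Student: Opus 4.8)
The plan is to determine $Z_g'$ as an element of the subring of $A^*(\calX_g',\QQ)$ generated by $\Theta$, $D$ and $\Delta$, which is legitimate because Theorem \ref{thm:expressible} guarantees that the closure of the zero section lies in this subring, and then to pin down the coefficients. First I would record the base case over the open part: restricting to $\calX_g$, where $D$ and $\Delta$ vanish, only the $b=c=0$ term survives, and the zero section of the uncompactified family has class $\Theta^g/g!$, so $\alpha_{g,0,0}=1/g!$. This fixes the interior and matches the known uncompactified result. The bulk of the work is to control the boundary corrections, and for this I would first make the geometry near $D$ explicit: describe $D=\calX_g'\setminus\calX_g$ as the universal compactified $\CC^*$-extension over $\calA_{g-1}$, that is, a $\PP^1$-bundle $\PP(\calO\oplus\calP)$ over the universal Poincaré bundle $\calP$ on $\calX_{g-1}\times_{\calA_{g-1}}\calX_{g-1}$ with its $0$- and $\infty$-sections glued, identify $\Delta$ as the image of this gluing, and compute the self-intersection $D|_D$ and the normal bundles in terms of $c_1(\calP)$ and the theta classes.

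Next I would analyze how the closure $z_g'(\calA_g')$ degenerates along $D$. The identity section of the universal semiabelian variety limits into the boundary, and since this closure meets $D$ nontransversally and runs near the gluing locus, computing its class requires an excess-intersection argument in which the Poincaré bundle supplies the correction terms. Because $D$ and $\Delta$ are built from genus $(g-1)$ data ($\calX_{g-1}$ and $\calX_{g-1}\times_{\calA_{g-1}}\calX_{g-1}$), I would then restrict the proposed expression to these strata and compare with the zero section of $\calX_{g-1}'$, obtaining a recursion in $g$ for the coefficients $\alpha_{a,b,c}$. I expect the shifted generators $\Theta-D/8$ and $\Delta-2\Theta D$ to be engineered precisely so that these restrictions are clean, so that the recursion decouples into a manageable form; the intersection numbers can additionally be cross-checked against explicit one-parameter test families of degenerating abelian varieties.

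Finally I would solve the recursion with the initial condition $\alpha_{g,0,0}=1/g!$. The double factorials $(2a+2c-1)!!$, $(2b+2c-1)!!$ and $(2a+2b+2c-1)!!$ in \eqref{eqn:beta} point to a generating-function identity of Gaussian/theta-moment type, which is natural given the heat-equation behavior of the theta function underlying $\Theta$, while the Bernoulli numbers $B_{2b+2c}$ and the factor $2^{-b-c}-2^{1-3b-3c}$ should emerge from summing the normal-bundle contribution along the $\CC^*$-fibration of $D$. This is the same mechanism responsible for the Ekedahl--van der Geer relation $\lambda_g=(-1)^g\zeta(1-2g)\delta_g$, recalling $\zeta(1-2g)=-B_{2g}/(2g)$. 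I would then confirm that the closed form \eqref{eqn:beta} satisfies the recursion by a direct manipulation of these special values, completing \eqref{eqn:main}.

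The main obstacle is the boundary contribution itself: the nontransverse intersection of the zero-section closure with $D$, passing near $\Delta$, forces a careful excess-intersection computation controlling $c_1(\calP)$ together with the self-intersections of $D$ and $\Delta$, and it is exactly this step that generates the nontrivial coefficients rather than a naive extension of $\Theta^g/g!$. Extracting the precise constants then reduces to proving the Bernoulli/double-factorial generating-function identity, which I view as the second, purely combinatorial, difficulty.
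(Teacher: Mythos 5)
Your top-level skeleton (restrict to the boundary, use expressibility in $\Theta,D,\Delta$, then solve for coefficients) matches the paper's, but the two steps that carry the actual content are wrong or missing. First, the geometric core. You assert that the closure of the zero section ``meets $D$ nontransversally and runs near the gluing locus,'' so that its boundary class must be extracted by an excess-intersection argument. This is not the actual geometry: the zero section extends to an honest section $z_g'$ of $\calX_g'\to\calA_g'$, whose limit over a boundary point $(B,b)$ is the identity element of the semiabelic fiber, i.e.\ the point $(B,\beta=0,b,x=1)$ --- at $x=1$ in the $\PP^1$-fiber, hence away from the $0$- and $\infty$-sections and from $\Delta$. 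The intersection with the boundary is proper and generically transverse (it is just the section over $\partial\calA_g'=\calX_{g-1}$), so no excess-intersection correction occurs; the paper computes the restriction directly (Proposition~\ref{prop:xiT}) as $\partial Z_g'=\xi T_1^{g-1}/(g-1)!$, by applying the open-family formula of Theorem~\ref{thm:0g} in genus $g-1$ to the locus $\{(B,0,b)\}=pr_1^{-1}(z_{g-1})$. In particular the nontrivial coefficients are not ``generated'' by excess intersection, as you claim: they arise purely algebraically, when this simple class is rewritten in terms of the only generators that descend to the non-normal boundary and extend to $\calX_g'$, namely the shift- and $j$-invariant classes of Proposition~\ref{prop:pullback}. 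Note also that your very first step (invoking Theorem~\ref{thm:expressible} for $Z_g'$) already presupposes that $Z_g'|_{\widetilde Y}$ is a polynomial in divisor classes, which is exactly what Proposition~\ref{prop:xiT} provides; your proposal never establishes that hypothesis.

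Second, the coefficient extraction. Your proposed ``recursion in $g$'' obtained by ``comparing with the zero section of $\calX_{g-1}'$'' cannot be set up: the boundary of $\calX_g'$ is a $\PP^1$-bundle over the \emph{open} fiber square $\calX_{g-1}^2$ glued to itself; the compactified family $\calX_{g-1}'$ does not appear in it, and the only genus-$(g-1)$ input is the open-part class $T_1^{g-1}/(g-1)!$. What is actually needed --- and what your proposal omits entirely --- is the complete description of the ideal of relations among $T_1,P,T_2$ (Theorem~\ref{thm:subring}: all relations are generated by $(T_1+nP+n^2T_2)^g=0$), without which ``matching coefficients'' against $\xi T_1^{g-1}/(g-1)!$ is meaningless, since two polynomial expressions represent the same class exactly when they differ by an element of this ideal. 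With that in hand, the paper pins down $\alpha_{a,0,c}$ by setting $T_2=0$, substituting $T_1=\left(n+\tfrac12\right)P$, and summing the resulting identities over $n$ --- this power-sum step, not a normal-bundle summation along the $\CC^*$-fibration, is where the Bernoulli numbers enter --- and then determines the remaining $\alpha_{a,b,c}$ from a triangular linear system obtained by setting $\xi=0$ and applying a formal square root of the shift operator (\ref{eq:shift}), the resulting identity being checked to be a multiple of the relation (\ref{eq:positivegzero}). So while your outline points in the right general direction, the proof's two pillars --- the transverse geometric identification of $\partial Z_g'$ and the ring-theoretic relations that make coefficient-matching possible --- are respectively misrepresented and absent, and the argument as proposed would not go through.
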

\begin{rem}
The classes $\Theta-D/8$ and $\Delta -2\Theta D $ above may  seem like a random choice, but in fact have a geometric significance. Indeed, $\Theta-D/8$  is in a sense the class of the theta divisor, with generic vanishing on the boundary taken out, and appears for example in Grothendieck-Riemann-Roch computations in \cite{ekvdgcycles}, while $\Delta -2\Theta D $ is a natural ``shift-invariant'' class (see below).

Equivalently, the class of the partial compactification of the zero section can be written as
$$
  Z_g'=\displaystyle\sum_{a+b+2c=g}\eta_{a,b,c}\Theta^a D^b\Delta^c,
$$
where the coefficients $\eta_{a,b,c}$ are equal to
$$
\frac{(-1)^{b+c}(2c+2b-1)!!}{2^{3b+3c}a!c!}\displaystyle\sum\limits_{x=0}^b
 \frac{(2-2^{2c+2x})B_{2c+2x}}{(2c+2b-2x-1)!!(2c+2x-1)!!(b-x)!x!}.
$$
\end{rem}

We note that as $\calX_{g}/\pm 1$ is the boundary of the partial compactification $\calA_{g+1}'$, we can interpret the above result as computing the class $\delta_{g+1}\in A^*(\overline{\calA}_{g+1},\QQ)$ up to the second boundary stratum, of semiabelic varieties of torus rank two --- see Remark~\ref{rem:goingtoAg} for more details on this.

The theorem above was surprising to us, as it claims that $Z_g'$, which is a degree $g$ class, admits a polynomial expression in classes of degree $1$ and $2$. However, this turns out to be a fairly general phenomenon. Namely, we prove the following result.

\begin{thm}\label{thm:expressible}
Let $\widetilde{Y}$ denote the normalization of the boundary of the partial compactification $\calX_g'\to\calA_g'$. Any class in $A^*(\calX_g',\QQ)$ whose pullback to $\widetilde{Y}$ is a polynomial in divisor classes on $\widetilde{Y}$ can be expressed on $\calX_g'$ as a polynomial in the three classes $\Theta$, $D$ and $\Delta$.
\end{thm}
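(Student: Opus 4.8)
The plan is to run the excision (localization) exact sequence for the stratification of $\calX_g'$ into its interior $\calX_g$ and the boundary divisor $D=\calX_g'\setminus\calX_g$. Writing $j:\calX_g\hookrightarrow\calX_g'$ for the open inclusion, $\nu:\widetilde{Y}\to D$ for the normalization, and $\iota:\widetilde{Y}\to\calX_g'$ for the composite with the closed embedding, the sequence
$$
A_{*}(\widetilde{Y})\xrightarrow{\ \iota_*\ }A_{*}(\calX_g',\QQ)\xrightarrow{\ j^*\ }A_{*}(\calX_g,\QQ)\to 0
$$
is exact, since $\nu$ is proper and birational and so surjective on rational Chow groups, and $D$ is a divisor. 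Thus a class $\gamma$ is determined, modulo $\op{im}\iota_*$, by its interior restriction $j^*\gamma$. I would first dispose of the interior. Because $D|_{\calX_g}=\Delta|_{\calX_g}=0$, a polynomial in $\Theta,D,\Delta$ restricts on $\calX_g$ to a polynomial in $\Theta|_{\calX_g}$; so the interior matching amounts to writing $j^*\gamma$ as such a polynomial. Establishing that $j^*\gamma$ has this form, rather than involving independent Hodge (or Poincar\'e) classes, is itself part of the content: it must be deduced by combining the $\widetilde Y$-hypothesis with the known generation of $A^*(\calX_g,\QQ)$ by the theta class over the base (the Beauville--Deninger--Murre decomposition together with $\Theta^{g+1}\equiv 0$ modulo pullbacks). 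Granting this, subtract the matching polynomial $P_0(\Theta)$, so that $\gamma_1:=\gamma-P_0(\Theta)\in\ker j^*=\op{im}\iota_*$, say $\gamma_1=\iota_*\beta$.

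The next step is an explicit description of $\widetilde{Y}$ and of the two restriction maps. The boundary $D$ is the universal rank-one semiabelic variety, and its normalization $\widetilde{Y}$ is the compactified universal Poincar\'e bundle: a $\PP^1$-bundle $p:\widetilde{Y}\to\calX_{g-1}$ equipped with a $0$-section and an $\infty$-section whose images are identified to form $\Delta$. Hence $A^*(\widetilde{Y})$ is generated over $p^*A^*(\calX_{g-1},\QQ)$ by the two section classes, and the key computation is to express $\iota^*\Theta$, $\iota^*D$, and $\iota^*\Delta$ explicitly in these terms. In particular $\iota^*D=c_1(N_{D/\calX_g'})$ is obtained from the self-intersection formula, and $\iota^*\Delta$ and the section classes are then identified with combinations of $\iota^*\Theta$, $\iota^*D$, and pullbacks from the base. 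The output I want is that $\iota^*(\QQ[\Theta,D,\Delta])$ already contains every polynomial in divisor classes on $\widetilde Y$; this generation statement is exactly the local boundary geometry underlying Theorem~\ref{thm:main}.

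Finally I would invert the pushforward via the projection formula $\iota_*(\iota^*\alpha\cdot\beta')=\alpha\cdot\iota_*\beta'$ together with $\iota_*1=D$ (tracking the rational degree of $\nu$). The hypothesis that $\gamma_1|_{\widetilde Y}$ is a polynomial in divisor classes, fed through the self-intersection relation $\iota^*\iota_*\beta=\beta\cdot\iota^*D$ and the generation statement of the previous step, lets me choose $\beta$ to be itself a polynomial in divisor classes on $\widetilde Y$, hence a polynomial in $\iota^*\Theta,\iota^*D,\iota^*\Delta$. Applying the projection formula then writes $\iota_*\beta=\gamma_1$ as a polynomial in $\Theta,D,\Delta$, the factor $D=\iota_*1$ absorbing the codimension shift; adding back $P_0(\Theta)$ gives the claimed expression for $\gamma$.

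The main obstacle is the non-injectivity of $\iota_*$: knowing $\iota_*\beta$ does not determine $\beta$, and the self-intersection formula recovers only $\beta\cdot c_1(N_{D/\calX_g'})$, where the normal-bundle class of a boundary divisor is typically not a non-zero-divisor. Resolving this requires exploiting the explicit $\PP^1$-bundle structure of $\widetilde Y$ to pin down $\beta$ modulo the annihilator of $\iota^*D$, and then running an induction on codimension: any residual ambiguity again lies in $\op{im}\iota_*$ of a class restricting to zero, to which the whole argument reapplies in strictly lower degree. Simultaneously controlling this ambiguity and verifying that the interior restriction $j^*\gamma$ carries no stray Hodge classes beyond $\QQ[\Theta|_{\calX_g}]$ is the delicate heart of the proof.
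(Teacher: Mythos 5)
Your proposal has a genuine gap at its central step, and the gap sits exactly where the content of the theorem lies. You assert, as the ``output I want,'' that $\iota^*(\QQ[\Theta,D,\Delta])$ contains \emph{every} polynomial in divisor classes on $\widetilde Y$. This is false. The restriction $A^*(\calX_g',\QQ)\to A^*(\widetilde Y,\QQ)$ factors through the non-normal boundary $Y$, so any class pulled back from $\calX_g'$ is automatically (i) invariant under the involution $j$ of the cover $\widetilde Y\to Y$, which acts by $j^*\xi=\xi-P$, $j^*P=-P$, $j^*T_i=T_i$, and (ii) \emph{shift-invariant}: its restrictions to the $0$- and $\infty$-sections must agree after the gluing shift $s$. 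The image of $\QQ[\Theta,D,\Delta]$ is therefore contained in (and, by Propositions~\ref{prop:pullback} and~\ref{prop:deltaclass}, equal to) the subring of $j$- and $s$-invariant elements of $\QQ[\xi,T_1,P,T_2]/I$; it emphatically does not contain all polynomials in divisor classes. Already $P$ and $\xi$ are divisor classes on $\widetilde Y$ that cannot be restrictions, since restrictions are $j$-invariant while $j^*P=-P$ and $j^*\xi=\xi-P$; likewise $T_1$ is $j$-invariant but violates the gluing condition, since $s^*(T_1)=T_1+P+T_2\neq T_1$. Your proposal never mentions either symmetry, yet identifying the invariant subring and exhibiting its three generators (including recognizing $-4\xi T_2-P^2+2PT_2$ as $\Delta|_{\widetilde Y}$) \emph{is} the paper's entire proof. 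A related error: $\widetilde Y$ is not a $\PP^1$-bundle over $\calX_{g-1}$ but over the fiber square $\calX_{g-1}^2=\calX_{g-1}\times_{\calA_{g-1}}\calX_{g-1}$ (the projectivized Poincar\'e bundle); the dimensions in your model do not match, and it is only on the fiber square that the classes $T_1,P,T_2$ and the shift automorphism $s$ exist at all.

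Second, your excision superstructure (matching the interior, then inverting $\iota_*$ via the projection formula) aims at a literal identity in $A^*(\calX_g',\QQ)$, and you correctly identify, but by your own account do not resolve, the resulting obstacle: $\iota_*$ is not injective, and nothing in the hypothesis controls $j^*\gamma$ on the interior. In fact the literal statement you are chasing is false: $\lambda_1$ pulls back to the divisor class $L$ on $\widetilde Y$, yet $\Pic_\QQ(\calX_g')=\QQ\lambda_1\oplus\QQ\Theta\oplus\QQ D$ shows $\lambda_1$ is not a polynomial in $\Theta$, $D$, $\Delta$. The theorem must therefore be read with ``divisor classes'' meaning the classes $\xi,T_1,P,T_2$ generating $\widetilde R$, and with the conclusion as an identity of pullbacks to $\widetilde Y$ --- which is exactly how it is used in the proof of Theorem~\ref{thm:main}. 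Under that reading no excision or pushforward-inversion is needed; under your reading no proof can exist.
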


Along the way of proving these results, we also further investigate the geometry and intersection theory of the total space of the universal Poincar\'e bundle (i.e.~of $\calX_g'\setminus\calX_g$), which may be of independent interest.

\smallskip
Turning to the moduli space of curves, we apply the theorem above to obtain a partial answer to the following question of Eliashberg. Let $\ud=(d_1,\ldots,d_n)\in \ZZ^n$ be integers summing to zero, and consider the locus $R_\ud$ of curves $(X,p_1,\ldots,p_n)\in \calM_{g,n}$ such that $\sum d_ip_i$ is a principal divisor on $X$. This locus is known as the {\it double ramification cycle}, and the question is to compute the class of its closure in $\overline{\calM}_{g,n}$.

On a smooth curve $X$, the divisor $\sum d_i p_i$ is principal if and only if its image in $\op{Jac}(X)$ is zero. Therefore, the double ramification cycle on $\calM_{g,n}$ can be computed by pulling back the zero section of the universal Jacobian under the Abel--Jacobi map $s_{\ud}:\calM_{g,n}\to \op{Jac}$ that sends $(X,p_1,\ldots,p_n)$ to $\mathcal{O}_X(\sum d_i p_i)\in \op{Jac}(X)$. This map naturally extends to curves of compact type, since the Jacobians of such curves are abelian varieties, and this approach was used by Hain \cite{hainnormal} to compute the class of the closure of $R_\ud$ in $\calM_{g,n}^{ct}$.

In this paper, we take this approach one step further. To extend the Abel--Jacobi map beyond $\calM_{g,n}^{ct}$, we need to allow the target abelian varieties to degenerate. The Torelli embedding $\calM_g\rightarrow\calA_g$ extends to a morphism $\oMg\to\oAg$ both to the perfect cone and the second Voronoi compactification, by \cite{namikawabook}, \cite{albr}, \cite{mevi}. The preimage of $\calA_g'$ under the Torelli morphism to any toroidal compactification is equal to the locus of stable curves of geometric genus $g-1$. The Abel--Jacobi map does not extend to this locus -- in examples \ref{fails1} and \ref{fails2}, we show that the Abel--Jacobi map cannot in general be defined for curves whose dual graph has a non-trivial cycle having more than one edge. However, the Abel--Jacobi map does extend to the open locus of stable curves having at most one non-separating node (equivalently, whose dual graph has at most one loop consisting of a single edge). Denoting this locus $\overline{\calM}_{g,n}^o$, we obtain a morphism $\overline{\calM}_{g,n}^o\to\calX_g'$. Computing the class of the zero section in the partial compactification and pulling it back, we find the class of the closure of the double ramification cycle in $\overline{\calM}_{g,n}^o$.

\begin{thm}\label{thm:Mg}
Let $\overline{\calM}_{g,n}^o$ be the open subset of $\overline{\calM}_{g,n}$ parameterizing curves with at most one non-separating node. Let $\ud=(d_1,\ldots,d_n)\in \ZZ^n$ be integers summing to zero, and let $R_\ud$ denote the double ramification cycle defined above. Then the class of the closure $\overline{R}_\ud^o$ of  $R_\ud$ in $\overline{\calM}_{g,n}^o$ is equal in $A^g(\overline{\calM}_{g,n}^o,\QQ)$ to
$$
[\overline{R}_\ud^o]=\displaystyle\sum_{a+b=g}\eta_{a,b,0}(s^*_\ud\Theta)^a \delta_{irr}^b.
$$
where $\eta_{a,b,0}$ are the same as in Theorem~\ref{thm:main}, and $s^*_\ud\Theta$ denotes the pullback of the class $\Theta$ from $\calX_g'$ to $\overline{\calM}^o_{g,n}$ under the Abel--Jacobi map, computed in \cite{hainnormal}, \cite{grza1} to be
$$
s^*_\ud\Theta=\frac{1}{2}\displaystyle\sum_{i=1}^n d_i^2 K_i-\frac{1}{2}\displaystyle\sum_{P\subseteq I}
\left(d_P^2-\displaystyle\sum_{i\in P}d_i^2\right)\delta_0^P-\frac{1}{2}
\displaystyle\sum_{h> 0,P\subseteq I} d_P^2
\delta_h^P,
$$
where $K_i$ and $\delta_h^P$ are the standard divisor classes on $\overline{\calM}_{g,n}^o$ (see Section~\ref{sec:eliashberg} for details), $I=\{1,\ldots,n\}$ is the indexing set, and $d_P=\sum_{i\in P} d_i$.
\end{thm}

\begin{rem}
On the moduli space of curves of compact type this formula restricts to the result of Hain \cite{hainnormal} by taking only the constant term in $\delta_{irr}$, while on the moduli space of curves with rational tails (having a smooth component of maximum genus) this formula restricts to the result of Cavalieri, Marcus, and Wise \cite{cavalieri}.

We would like to stress, however, that while $\overline{\calM}_{g,n}^o\setminus\calM_{g,n}^{ct}$ is an irreducible divisor, computing a codimension $g$ class on $\overline{\calM}_{g,n}^o$ involves much more than computing it on $\calM_{g,n}^{ct}$, and then computing one extra coefficient.
\end{rem}
\begin{rem}
It may seem surprising that the degree 2 class $\Delta$ from theorem \ref{thm:main} does not appear here, i.e.~that we are only taking the constant term in $\Delta$ (that is, $c=0$) of the expression there. This is due to the fact that the image of $\overline{\calM}_{g,n}^o$ under the Abel-Jacobi map is disjoint from $\Delta$, as we will see in the proof of the theorem
\end{rem}

\medskip
The structure of the paper is as follows. In Section~\ref{sec:notation} we introduce the notation, and review the known results on the geometric structure of the boundary of $\calX_g'$ (which is also the second stratum of the boundary of $\overline{\calA}_{g+1}$), mostly following \cite{ergrhu2}. In Section~\ref{sec:semiabelic} we study the subring of its Chow ring generated by the divisor classes. In Section~\ref{sec:shift} we study the normalization of the boundary of $\calX_g'$ and describe the classes on the normalization that glue to classes on the actual boundary of $\calX_g'$, culminating with a proof of Theorem~\ref{thm:expressible}. In Section~\ref{sec:main} we study the closure of the zero section and obtain an expression for it, proving Theorem~\ref{thm:main}.
Finally, in Section~\ref{sec:eliashberg} we use this theorem, together with standard intersection techniques on $\overline{\calM}_{g,n}$, to obtain an answer to Eliashberg's problem, proving Theorem~\ref{thm:Mg}.

\section{Notation and known results}\label{sec:notation}

Throughout the text, we work with Chow groups with rational coefficients. The spaces that we work with are smooth Deligne-Mumford stacks, and thus the Chow groups admit a ring structure (below, we specifically avoid working with the Chow groups of the non-normal boundary $Y=\calX_g'\backslash\calX_g$).

We denote by $\calX_g\to\calA_g$ the universal family of principally polarized abelian varieties (considered as a stack). We will also be concerned with the universal family of Kummer varieties $\pi:\calX_g/\pm 1\to\calA_g$ -- the quotient of the universal family of ppav by the involution $\pm1$. We denote by $z_g:\calA_g\to\calX_g$ the zero section, and by abuse of notation, we also denote $z_g$ the image of the zero section as a locus in $\calX_g$ (or in $\calX_g/\pm 1$ depending on context). We denote $Z_g$ the class of the zero section in the Chow group $A^g(\calX_g)$. We denote by $T\in A^1(\calX_g)$ the class of a universal symmetric theta divisor trivialized along the zero section; note that such a divisor is only defined up to a translation by a 2-torsion point, but its class in the rational Chow ring is well-defined.

Our problem is motivated by the following result:

\begin{thm}[\cite{hainnormal} in homology, implied by the results of \cite{denmur} in Chow]\label{thm:0g}
The class of the zero section in $A^{g}(\calX_g)$ and in $H^{2g}(\calX_g)$ is equal to
$$
Z_g=\frac{T^g}{g!}.
$$
\end{thm}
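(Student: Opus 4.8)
The plan is to compute the class of the zero section $Z_g = [z_g]$ in $A^g(\calX_g)$ by exploiting the fact that $\calX_g \to \calA_g$ is an abelian scheme (or a family of ppav, up to the $\pm1$ stacky issues), and that the zero section is the fiberwise origin of this family. The guiding principle is that for an abelian scheme $\pi\colon \calX \to B$ with a relatively ample symmetric line bundle $L$ inducing a principal polarization, the class of the zero section is governed by a Gauss--Bonnet / self-intersection mechanism: restricting the theta divisor class $T$ to fibers gives the principal polarization, so $T^g$ integrates to $g!$ over each $g$-dimensional fiber. Concretely, I would first verify that $\pi_*(T^g) = g!\cdot [\calA_g]$ as a class on the base, using that $T|_{A}$ is a principal polarization on each fiber $A$ and that $\int_A \theta^g = g!$ by the Riemann--Roch theorem for abelian varieties (the self-intersection of a principal theta divisor is $g!$). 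This pins down the correct normalization of the conjectured formula.

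The heart of the argument is to show that $Z_g$ is actually \emph{equal} to $T^g/g!$, not merely that the two have the same fiberwise degree. The cleanest route is to use the structure of the cohomology/Chow ring of an abelian scheme as a module over the base. First I would recall the decomposition of $T$ into its fiberwise and horizontal parts: since $T$ is trivialized along the zero section, $z_g^* T = 0$, and $T$ restricted to each fiber is the symmetric principal polarization. One then knows that the Fourier--Mukai transform, or equivalently the Beauville/Deninger--Murre decomposition of the Chow groups of an abelian scheme, expresses the zero section as a universal polynomial in $T$ with coefficients determined by the fiberwise intersection theory. The key input is the Deninger--Murre result (cited in the theorem statement via \cite{denmur}) that the Chow--Künneth components behave correctly, so that the diagonal class — and hence the zero section, which is the pullback of the diagonal under $(\mathrm{id}, z_g)$ — is given by the expected polynomial. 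The homological statement in $H^{2g}$ follows from Hain's computation \cite{hainnormal}, and the lift to Chow follows from the explicit Chow-theoretic decomposition in \cite{denmur}, as organized in \cite{bila}.

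To make the equality $Z_g = T^g/g!$ rigorous rather than merely numerical, the essential step is to show that both sides lie in the subring generated by $T$ over the base, and that they agree after pushforward \emph{and} after restriction to every fiber, together with a rigidity or vanishing argument controlling the ambiguity. Since $T$ restricts to a principal polarization on each fiber, the classes $1, T|_A, \ldots, (T|_A)^g$ span the relevant graded pieces of the fiber cohomology, and $Z_g|_A = [0] = (T|_A)^g/g!$ holds fiberwise because a point on a principally polarized abelian variety is the normalized top self-intersection of the theta divisor. The potential gap is horizontal: a priori $Z_g - T^g/g!$ could be a nonzero class that dies on every fiber but survives globally (a class pulled back from a positive-codimension cycle on $\calA_g$).

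\medskip
\textbf{The main obstacle}, therefore, is precisely this vanishing of the horizontal correction term. I expect to handle it using that $z_g^*$ applied to $T$ is zero by the choice of trivialization, so the only surviving ``boundary'' contributions would have to come from classes on $\calA_g$, and these are controlled by the fact that $\calX_g \to \calA_g$ has a section (the zero section itself) through which the polarization is normalized — this forces the correction to vanish by the projective-bundle-type formula for the Chow ring of an abelian scheme in terms of $T$. In the compactified setting of the rest of the paper the analogous horizontal terms do \emph{not} vanish, which is exactly why Theorem~\ref{thm:main} is vastly more complicated than Theorem~\ref{thm:0g}; on the open part $\calX_g$, however, the normalization $z_g^* T = 0$ kills them, and this is the crux that makes the clean formula $Z_g = T^g/g!$ possible.
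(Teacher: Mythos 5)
Your proposal founders at exactly the step you yourself flag as the main obstacle. The rigidity mechanism you invoke --- a ``projective-bundle-type formula for the Chow ring of an abelian scheme in terms of $T$,'' together with the claim that $1,T|_A,\ldots,(T|_A)^g$ span the relevant fiber cohomology --- does not exist. An abelian scheme is fundamentally unlike a projective bundle: the fiber cohomology is $\wedge^*H^1(A,\QQ)$, of dimension $2^{2g}$, and powers of the polarization span only the $(g+1)$-dimensional monodromy-invariant subring; neither $H^*(\calX_g)$ nor $A^*(\calX_g,\QQ)$ is generated over the base by $T$. Concretely, for $g\ge 2$ the class $\pi^*\lambda_1\cdot T^{g-1}\in A^g(\calX_g,\QQ)$ is nonzero (its product with $T$ pushes forward to $g!\,\lambda_1$), yet it restricts to zero on every fiber, is killed by $z_g^*$ (since $z_g^*T=0$), and has vanishing pushforward under $\pi$. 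It thus satisfies every constraint your argument imposes, so your method cannot distinguish $T^g/g!$ from $T^g/g!+\pi^*\lambda_1 T^{g-1}$; excluding all such fiberwise-trivial corrections is the entire content of the theorem, not a residual formality, and your sketch never supplies an argument that does it. (Note also that the fiberwise identity $[0]=(T|_A)^g/g!$, which you treat as evident, is already a nontrivial theorem in the Chow ring of a single ppav.)

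For comparison: the paper gives no proof of its own --- the statement is quoted, with Hain's Hodge-theoretic argument \cite{hainnormal} covering (co)homology and the Fourier-transform argument based on Deninger--Murre \cite{denmur}, written out in \cite{bila} and \cite{vdgmoo}, covering Chow. That Fourier-transform route is also how your gap is actually closed: one checks $F(Z_g)=[\calX_g]$ (because the Poincar\'e bundle is trivialized along the zero section) and $F(T^g/g!)=[\calX_g]$ (from the relative identity $F(e^T)=e^{-T}$, which uses that $T$ is symmetric and trivialized along $z_g$), and Fourier inversion then gives $Z_g=T^g/g!$ outright, with no need to control correction terms at all. You do name this machinery in passing, but the proof you actually outline replaces it by the false spanning claim above, and that is precisely where it fails.
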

\begin{rem}\label{HA}
This result has a long history, and many approaches to it have been developed. We are grateful to Richard Hain, Claire Voisin, and Gerard van der Geer for discussions on these topics. Indeed, Hain \cite[Prop.~8.1]{hainnormal} proves this result in cohomology using Hodge-theoretic methods, while the argument in the Chow ring uses the Fourier transform on the Chow ring, and is based on ideas of Deninger and Murre, including \cite[Cor.~2.22]{denmur}; this statement is given as \cite[Exercise~13.2]{vdgmoo}. We also refer to Section \ref{sec:semiabelic} for more results and a discussion of the relationship of the Poincar\'e bundle and the class $T$.
\end{rem}

The goal of this paper is to extend this formula to Mumford's partial compactification of the moduli space of ppav, which we denote by $\calA_g'$. In this section, we recall the construction of the universal family over the partial compactification.

The partial compactification is the blow-up of the partial Satake compactification $\calA_g\sqcup\calA_{g-1}$ along the boundary. The boundary of the partial compactification is the universal family $\calX_{g-1}$:
$$
\calA_g'=\calA_g\sqcup \calX_{g-1}.
$$

Geometrically, the boundary of the partial compactification parameterizes semiabelic varieties of torus rank one, described as follows. For a point $(B,b)\in \calX_{g-1}$, where $B\in \calA_{g-1}$ is an abelian variety of dimension $g-1$ and $b\in B$ a point on it, up to sign, the semiabelic variety corresponding to $(B,b)$ is obtained by compactifying the $\CC^*$-extension of $B$
$$
1\to \CC^*\to G\to B\to 0
$$
to a $\PP^1$-bundle $\widetilde{G}$ over $B$ by adding the $0$- and $\infty$-sections, and then gluing these sections with a shift by $b$ to obtain the non-normal variety $\overline{G}=\widetilde{G}/(\beta,0)\sim(\beta+b,\infty)$ (we use $\beta$ instead of the more standard notation $z$, to distinguish this from the zero section, and to emphasize that $\beta$ and $b$ are in a sense points of dual abelian varieties).

We extend the universal family $\pi:\calX_g\to\calA_g$ to a family over the partial compactification $\pi':\calX_g'\to\calA_g'$ by globalizing the construction above. We follow the notation of \cite{ergrhu2}, the results and setup of which we now recall. We let $\calX^2_{g-1}=\calX_{g-1}\times_{\calA_{g-1}} \calX_{g-1}$ be the fiberwise square, with $pr_i:\calX^2_{g-1}\to\calX_{g-1}$ denoting the projections to the two factors. Let $\calP$ denote the Poincar\'e bundle on $\calX_{g-1}^2$, and let $\widetilde{Y}=\PP(\calP\oplus\calO)$ denote the projectivization of $\calP$. We now define the extension $Y$ of the universal family over the boundary by gluing the $0$- and $\infty$-sections of $\widetilde{Y}$ with a shift by the second coordinate, and factorizing by the involution. In other words, we glue  $(B,\beta,b,0)\in \widetilde{Y}$ and $(B,\beta+b,b,\infty)\in \widetilde{Y}$, and then factorize by $j$, where $j$ denotes the involution on the semiabelic variety fiber of $\widetilde Y\to\calX_{g-1}$. We denote $\Delta\subset Y$ the gluing locus (and by abuse of notation its class in cohomology), i.e.~$\Delta$ denotes the image of the glued $0$- and $\infty$-sections of $\widetilde Y$. We summarize the geometry in the following diagram:
$$
\xymatrix{&&&(\widetilde{Y}=\PP(\calP\oplus\calO))/j\ar[ld]\ar[d]\ni(B,\beta,b,x)\\
\calX_g/\pm 1\ar[d]&\sqcup&Y\ar[d]&\calX_{g-1}^2\ni(B,\beta,b)\ar[ld]^{pr_2}\ar[d]^{pr_1}\\
\calA_g\ar[d]&\sqcup&\calX_{g-1}\ar[d]\ni (B,b)&\calX_{g-1}\ar[ld]\ni(B,\beta)\\
\calA_g&\sqcup & \calA_{g-1}\ni(B)
}
$$

We avoid working directly on the boundary family $Y$, because it is not normal and the Chow groups do not have an intersection product. We instead do all our computations on $\widetilde Y$, which is a $\PP^1$-bundle over $\calX_{g-1}^2$, and then only at the end take the involution and the gluing into account by requiring our computations to be invariant under them.

We now summarize the known results about the Chow rings of the various objects in the diagram.

The Picard group of $\calA_{g-1}$ is equal to the first Chow group and is generated by the first Chern class $\lambda_1$ of the Hodge bundle. The Picard group and the first Chow group of the universal family is $\Pic_\QQ(\calX_{g-1})=\QQ\lambda_1\oplus\QQ T$, where we recall that $T$ is the class of a universal symmetric theta divisor trivialized along the zero section.

The Picard group of the product family $\calX_{g-1}^2$ is generated  by the pullback of $\lambda_1$, which we denote by $L$, by the pullbacks $T_i=pr_i^*T$ of the theta divisors from the two factors, and by the class $P$ of the universal Poincar\'e bundle, also trivialized along the zero section (see \cite{ergrhu2}). By abuse of notation, we also use $L$, $T_1$, $P$ and $T_2$ to denote the pullbacks of these classes to $A^1(\widetilde{Y})$. We recall that by the results of Deninger and Murre \cite{denmur} (see also \cite{voisin}), the direct image $R\pi_*\QQ$ of a constant sheaf in any family of ppav admits a multiplicative decomposition. It follows, (see \cite[Prop.~4.3.6, Cor.~4.3.9]{voisinnotes} and Remark~\ref{HA}), that for classes $T_1$, $P$ and $T_2$ on $\calX_{g-1}^2$, all trivialized along the zero section by definition, a polynomial relation $f(T_1,P,T_2)=0$ holds in $H^*(\calX_{g-1}^2)$ if and only if it holds in the Chow ring and if and only if it holds fiberwise. Along the way of our computation, we compute the relations between these classes on a very general ppav, and thus describe entirely the subring of $H^*(\calX_{g-1}^2)$ (and of the Chow ring) generated by these classes --- the result is given in Theorem~\ref{thm:subring}.

The Chow and the cohomology rings of $\calX_{g-1}^2$ admit a natural automorphism which plays a key role in our computations. Let $s:\calX_{g-1}^2\to\calX_{g-1}^2$ denote the shift map defined by
$$
s(B,\beta,b)=(B,\beta+b,b),
$$
and let $s^*:A^*(\calX_{g-1}^2)\rightarrow A^*(\calX_{g-1}^2)$ denote the induced map on the Chow ring. The action of $s^*$ on the divisors $T_1$, $P$ and $T_2$ was computed in  \cite{grle},\cite{ergrhu2} to be
\begin{equation}\label{eq:shift}
s^*(T_2)=T_2;\quad s^*(P)=P+2T_2; \quad s^*(T_1)=T_1+P+T_2.
\end{equation}
(see also remark \ref{moonen} for an alternative viewpoint on the action).
The Chow ring of $A^*(\widetilde{Y})$ is generated over the Chow ring $A^*(\calX_{g-1}^2)$ by one class $\xi$ satisfying the relation $\xi^2=\xi P$ (see \cite{fultonintersection}). We think of $\xi$ as the class of the $0$-section, in which case $\xi-P$ is the class of the $\infty$-section, and the relation $\xi\cdot (\xi-P)=0$ expresses the fact that these sections do not intersect.

The action of the involution $j$ on $\widetilde Y$ is studied in detail in \cite[Sec.~4]{grhu2}, where it is described globally in coordinates. It is easy to see that $j$ interchanges the $0$- and $\infty$-sections of $\widetilde Y$, and thus its action on $A^*(\widetilde Y)$ interchanges $\xi$ and $\xi-P$, which implies in particular that $j^* P= -P$. From the explicit description of the action we then also see that $j^* T_i=T_i$ (since the theta divisors are symmetric).

\smallskip

We also consider several cycles on the entire partial compactification $\calX_g'$, and their pullbacks to $\widetilde{Y}$. The divisor $T$ extends to a universal polarization divisor $\T\in A^1(\calX_g')$. The boundary of $\calX_g'$ is an irreducible divisor, the class of which we denote $D$, therefore we have $\Pic_\QQ(\calX_g')=\QQ \lambda_1\oplus\QQ \Theta\oplus\QQ D$. Finally, we consider the class of the gluing locus $\Delta\in A^2(\calX_g')$.

Finally, we need to know how these cycles restrict to the boundary. Let $(\cdot)|_{\widetilde{Y}}:A^*(\calX_g')\rightarrow A^*(\widetilde{Y})$ denote the pullback map. Then by \cite{mumforddimag},\cite{grle}, and \cite{ergrhu2} we have
\begin{equation}
D|_{\widetilde{Y}}=-2T_2;\quad \Theta|_{\widetilde{Y}}=\xi+T_1-P/2.
\label{eq:howclassesrestrict}
\end{equation}
We compute the pullback of $\Delta$ to $\widetilde{Y}$ in Proposition~\ref{prop:deltaclass} (note that $\Delta|_\Delta$ was computed in \cite{ergrhu2}).

\section{Intersection theory on $\calX_{g-1}\times_{\calA_{g-1}}\calX_{g-1}$}\label{sec:semiabelic}
We prove our main result by restricting the formula (\ref{eqn:main}) to the boundary of the partial compactification of the universal family, and expressing all of the cycles involved in terms of the divisor classes $\xi$, $T_1$, $P$ and $T_2$ defined in the previous section. To compare products of cycles on the boundary, we first need to understand the subring of the Chow ring generated by these divisors.

In this section, we compute the subring of $A^*(\calX_{g-1}^2,\QQ)$ generated by the classes $T_1$, $P$ and $T_2$. We show there are no relations in codimension up to and including $g$, and that the ring is Gorenstein with socle in dimension $2g-2$. This calculation improves on the results of \cite{ergrhu2}, in particular on Theorem 7.1, which describes the pushforwards of products of $T_1$, $P$, and $T_2$ to the base $\calA_{g-1}$.
\begin{thm}\label{thm:subring}
Let $R$ denote the subring of $A^*(\calX_{g-1}^2,\QQ)$ generated by the classes $T_1$, $P$, and $T_2$, and let $R^k$ denote the subspace of $R$ spanned by monomials of degree $k$. Then
\begin{enumerate}
\item The ideal of relations in $R$ is generated by all the coefficients of the one basic relation
\begin{equation}\label{genrel}
(T_1+nP+n^2T_2)^g=0,\quad n\in \ZZ
\end{equation}
considered as a polynomial in $n$ (i.e.~by all the homogeneous in $n$ pieces of it).
In particular, there are no non-trivial relations between $T_1$, $P$ and $T_2$ in degree less than $g$.
\item $R$ is a Gorenstein ring with socle in codimension $2g-2$, in other words,
$$
R^{2g-2}\cong\QQ,\qquad \dim R^k=0\mbox{ for }k>2g-2,
$$
and for any $0\leq k\leq g-1$ the product map
$$
R^{g-1-k}\times R^{g-1+k}\rightarrow R^{2g-2}\cong \QQ
$$
is a perfect pairing (in particular $\dim R^{g-1-k}=\dim R^{g-1+k}$).  Moreover, the multiplication by $(T_1T_2)^{k}$ is an isomorphism from $R^{g-1-k}$ to $R^{g-1+k}$ .
\end{enumerate}
\end{thm}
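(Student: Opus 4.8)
The plan is to identify $R$ with an explicit Artinian Gorenstein ring by working fiberwise. By the results of Deninger--Murre recalled before the theorem, a polynomial relation among $T_1,P,T_2$ holds in $A^*(\calX_{g-1}^2,\QQ)$ if and only if it holds in $H^*(B\times B,\QQ)$ for a very general ppav $B$ of dimension $g-1$; I would therefore fix such a $B$ and compute in $H^*(B\times B)=\Lambda^*(V_1\oplus V_2)$, where $V_i=H^1(B,\QQ)$, $T_1,T_2$ are the polarization classes of the two factors, and $P$ is the Poincar\'e class. The basic relation (\ref{genrel}) I would obtain geometrically: writing $a_n\colon B\times B\to B$, $(x,y)\mapsto x+ny$, as the composition of $\op{id}\times[n]$ with the addition map, and applying the theorem of the cube to the symmetric bundle $\calO(\Theta)$, one gets $a_n^*\Theta=T_1+nP+n^2T_2$ (using $[n]^*\Theta=n^2\Theta$ and bi-additivity of the Poincar\'e bundle). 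Since $\dim B=g-1$ forces $\Theta^g=0$, pulling back yields $(T_1+nP+n^2T_2)^g=0$ for all $n$, so every coefficient $c_j$ vanishes and $R$ is a quotient of $A:=\QQ[T_1,P,T_2]/I$ with $I=(c_0,\dots,c_{2g})$.

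Next I would use the shift symmetry to understand $I$. Formula (\ref{eq:shift}) shows that $s^*=\exp(e)$ for the derivation $e$ of $\QQ[T_1,P,T_2]$ with $e(T_1)=P$, $e(P)=2T_2$, $e(T_2)=0$; with the opposite derivation $f$ this generates an $\mathfrak{sl}_2$ acting by derivations, under which $(T_1,P,T_2)$ spans the irreducible three-dimensional module. Because $(T_1+nP+n^2T_2)^g=\exp(ne)(T_1^g)$, the span of the $c_j$ is exactly the irreducible $\mathfrak{sl}_2$-submodule generated by the lowest weight vector $T_1^g$ inside $\Sym^g$; in particular $I$ is $\mathfrak{sl}_2$-stable, it is generated in degree $g$, and so there are no relations in degrees $<g$.

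The heart of the argument is to prove that $A$ is Artinian Gorenstein with socle in degree $2g-2$, which I would do by exhibiting its Macaulay inverse system. Introducing dual variables $x_1,y,x_2$ on which $T_1,P,T_2$ act as $\partial_{x_1},\partial_y,\partial_{x_2}$, I claim $I=\op{Ann}(F)$ for $F=(x_1x_2-y^2)^{g-1}$, the $(g-1)$-st power of the nondegenerate invariant quadric. The inclusion $I\subseteq\op{Ann}(F)$ is clean: the vector $(1,n,n^2)$ lies on the conic $\{x_1x_2=y^2\}$, so the directional operator $\mathcal{D}_n=\partial_{x_1}+n\partial_y+n^2\partial_{x_2}$ satisfies $\mathcal{D}_n^2(x_1x_2-y^2)=0$, whence an easy induction gives $\mathcal{D}_n^gF=0$. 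Equality $I=\op{Ann}(F)$ then follows by matching Hilbert functions, and yields at once that $A^{2g-2}\cong\QQ$, that the Hilbert function is symmetric with $\dim A^d=\binom{d+2}{2}$ for $d\le g-1$, and hence that the multiplication pairing $A^{g-1-k}\times A^{g-1+k}\to A^{2g-2}$ is perfect. The sharper claim that multiplication by $(T_1T_2)^k$ is itself an isomorphism $A^{g-1-k}\to A^{g-1+k}$ reduces, via apolarity, to checking that no nonzero degree-$(g-1-k)$ operator annihilates $(\partial_{x_1}\partial_{x_2})^kF$, a direct catalecticant computation (one may organize it using the invariant quadric $q=T_1T_2-P^2/4$, for which the analogous statement is $\mathfrak{sl}_2$-equivariant).

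Finally, to promote the surjection $A\twoheadrightarrow R$ to an isomorphism --- which simultaneously establishes part~(1) and transports all of part~(2) --- I would argue that the socle survives. Since $A$ is Artinian Gorenstein, every nonzero homogeneous ideal contains the one-dimensional socle $A^{2g-2}$, which is spanned by $(T_1T_2)^{g-1}$; but this class maps to $T_1^{g-1}T_2^{g-1}=((g-1)!)^2[\op{pt}]\ne0$ in $H^*(B\times B)$, so $R^{2g-2}\ne0$. Hence $\ker(A\to R)$ cannot contain the socle and must vanish, giving $R\cong A$. I expect the Gorenstein step to be the main obstacle: pinning down the inverse system with the correct apolarity normalization, verifying $\op{Ann}(F)=I$ (equivalently, that the middle catalecticant of $F$ has full rank), and proving the strong Lefschetz property for the non-ample class $T_1T_2$. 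A more computational alternative for the pairing, closer to the methods of \cite{ergrhu2}, is to evaluate it directly through the fiberwise pushforward of the monomials $T_1^aP^bT_2^c$ to $\calA_{g-1}$ and to check nondegeneracy of the resulting Gram matrices.
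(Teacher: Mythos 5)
Your architecture is genuinely different from the paper's proof (which is a direct computation: the auxiliary grading $d(T_1^aP^bT_2^c)=a-c$, explicit manipulation of the relations, the pushforward formula~(\ref{eq:pushforward}) from \cite{ergrhu2} for nondegeneracy, and an induction on degree), and several of your steps are correct: the theorem-of-the-cube derivation of $(T_1+nP+n^2T_2)^g=0$ is a legitimate substitute for Proposition~\ref{prop:rel}; the inclusion $I\subseteq\op{Ann}(F)$ for $F=(x_1x_2-y^2)^{g-1}$ via $\mathcal{D}_n^2(q)=0$ and pigeonhole is clean; and the socle-survival argument (every nonzero homogeneous ideal of a graded Artinian Gorenstein algebra contains the socle, while $(T_1T_2)^{g-1}$ maps to $((g-1)!)^2[\op{pt}]\neq 0$ fiberwise) would correctly upgrade the surjection $A\twoheadrightarrow R$ to an isomorphism and deliver both parts of the theorem at once. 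But the proof has a genuine gap exactly at its load-bearing step: the equality $I=\op{Ann}(F)$. ``Matching Hilbert functions'' is circular here. The inclusion $I\subseteq\op{Ann}(F)$ only gives a surjection $\QQ[T_1,P,T_2]/I\twoheadrightarrow\QQ[T_1,P,T_2]/\op{Ann}(F)$, i.e.\ a \emph{lower} bound on $\dim A^d$; to conclude equality you need an \emph{upper} bound, which is precisely the assertion that the ideal generated by $\op{Ann}(F)_g$ already exhausts $\op{Ann}(F)$ --- in classical language, that the apolar ideal of $q^{g-1}$ is generated in degree $g$. That is true but nontrivial, and it is not equivalent to a catalecticant rank condition on $F$ (catalecticants compute $\op{Ann}(F)$, not $I$). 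A genuine argument would be, e.g., to decompose $\Sym^d$ into harmonics, $\Sym^d=\bigoplus_j (q^*)^jH_{d-2j}$, show $\op{Ann}(F)_d=\bigoplus_{j\leq d-g}(q^*)^jH_{d-2j}$ for $d\geq g$, and then verify that $\Sym^{d-g}\cdot H_g$ hits every such isotypic component, which requires a non-vanishing statement for projections of products of harmonics (Gaunt-type coefficients). Without this, the Gorenstein structure of $A$ --- on which your socle argument and all of part~(1) depend --- is unproven.

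There is a second, smaller gap in the last claim of part~(2). Your apolarity reduction is correct: multiplication by $(T_1T_2)^k$ is injective on $A^{g-1-k}$ if and only if no nonzero operator of degree $g-1-k$ annihilates $(\partial_{x_1}\partial_{x_2})^kF$. But this computation is not carried out, and the proposed $\mathfrak{sl}_2$-equivariant shortcut does not do it: the equivariant statement concerns the invariant quadric and proves that multiplication by $(T_1T_2-P^2/4)^k$ is an isomorphism, which is a different class from $(T_1T_2)^k$; passing between the two still requires a non-equivariant argument. By contrast, the paper obtains surjectivity of $(T_1T_2)^k$ from its relation bookkeeping and injectivity from the inductive linear-independence argument anchored by~(\ref{eq:pushforward}), so nothing in its proof can be quoted to fill your gap for free. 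If you fill both gaps, your route is a nice conceptual alternative; as written, it is incomplete at its core.
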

\begin{rem}\label{moonen}
While we give an elementary direct computational proof of the theorem below, Moonen explained to us that this result can be deduced from a special case of his work \cite{moonen}, combined with the results of Thompson \cite{thompson}, stated in terms of representation theory.

Indeed, recall that given an ample divisor class $L$ on a projective variety $X$, multiplication by $L$ defines a degree $2$ operator $e$ on the cohomology ring $H^*(X,\QQ)$, which by the hard Lefschetz theorem extends to an $\mathfrak{sl}_2$-action on $H^*(X,\QQ)$, known as the {\it Lefschetz action}. It turns out that if $X$ is an abelian variety, such a Lefschetz action of $\mathfrak{sl}_2$ also exists on the Chow ring. This construction is implicit in K\"unnemann \cite{1993Kuennemann}, is explicitly described in Polishchuk's thesis \cite{1996Polishchuk}, and was recently described in greater generality by Beauville \cite{2010Beauville}.

In \cite{lolu}, Looijenga and Lunts considered a generalization of this Lefschetz action for the case when $X$ has many non-proportional ample classes. In this case, they construct a larger Lie algebra action on $H^*(X,\QQ)$ that includes an $\mathfrak{sl}_2$-action for each ample divisor class. For abelian varieties, Moonen \cite{moonen} recently constructed a corresponding Lie algebra action on the Chow ring. This Lie algebra includes $\mathfrak{sl}_2$ action (on the Chow) corresponding to any polarization class, and also contains the endomorphism algebra of an abelian variety.

For a very general ppav $X$, the N\'eron--Severi group $\op{NS}_{\QQ}(X\times X)$ is generated by $T_1$, $P$, and $T_2$, while the endomorphism algebra is $\mathfrak{gl}(2,\QQ)$, acting coordinatewise.
Then the results of Moonen imply that there exists an action of  $\mathfrak{sp}(4,\QQ)$ on $A^*(X\times X)$, lifting the action on cohomology constructed in \cite{lolu}, containing the endomorphism algebra $\mathfrak{gl}(2,\QQ)$ in the standard way, and such that the operators of multiplication by $T_1$, $P$ and $T_2$ lie in the Borel subalgebra.
Since the classes $T_1$, $P$, and $T_2$ lie in $H^{1,1}(X\times X)$, the action of $\mathfrak{sp}(4,\QQ)$ on them can be deduced, including in particular \eqref{eq:shift}. Furthermore, the ring $R$ generated by $T_1$, $P$, and $T_2$ (which, by the results of \cite{moonen} and \cite{lolu} is thus the same in Chow and cohomology) can be described completely as a representation of $\mathfrak{sp}(4,\QQ)$ --- this is the content of \cite[Theorem 3.4]{thompson}, which can be reinterpreted to show that in fact the ring $R$ forms an irreducible representation of $\mathfrak{sp}(4,\QQ)$ of weight $(g-1,g-1)$; this in particular implies that all relations in $R$ are given by (\ref{genrel}).
\end{rem}
We now prove the theorem, in an elementary direct way, by considering the action of the shift operator $s^*$ defined by (\ref{eq:shift}) on the ring $A^*(\calX_{g-1}^2)$ and by using Theorem~\ref{thm:0g}.
\begin{prop}\label{prop:rel}
For any integer $n$ relation (\ref{genrel}) holds in the Chow ring, i.e.~we have $(T_1+nP+n^2T_2)^g=0$ in $A^{g}(\calX_{g-1}^2,\QQ)$.
\end{prop}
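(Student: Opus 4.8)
The plan is to recognize that the class $T_1+nP+n^2T_2$ is exactly the pullback of $T_1$ under the $n$-th iterate of the shift operator, and thereby to reduce the entire family of relations to the single vanishing $T_1^g=0$. Iterating the formulas (\ref{eq:shift}) gives, by an immediate induction on $n$,
$$
(s^n)^*(T_2)=T_2,\quad (s^n)^*(P)=P+2nT_2,\quad (s^n)^*(T_1)=T_1+nP+n^2T_2.
$$
Since $s$ is an automorphism of $\calX_{g-1}^2$, the induced map $(s^n)^*$ is a ring isomorphism of $A^*(\calX_{g-1}^2,\QQ)$, so
$$
(T_1+nP+n^2T_2)^g=\bigl((s^n)^*T_1\bigr)^g=(s^n)^*\bigl(T_1^g\bigr).
$$
Thus it suffices to establish $T_1^g=0$, and then the relation (\ref{genrel}) for every integer $n$ follows at once by applying the isomorphism $(s^n)^*$.

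To prove $T_1^g=0$ I would push everything down to a single factor. Since $T_1=pr_1^*T$ and $pr_1^*$ is a ring homomorphism, we have $T_1^g=pr_1^*(T^g)$, so it is enough to show $T^g=0$ in $A^g(\calX_{g-1},\QQ)$. This is where Theorem~\ref{thm:0g}, applied in genus $g-1$, enters: the class of the zero section of $\calX_{g-1}\to\calA_{g-1}$ equals $Z_{g-1}=T^{g-1}/(g-1)!$. Consequently
$$
T^g=T\cdot T^{g-1}=(g-1)!\,T\cdot Z_{g-1}.
$$
Now $T$ is by definition trivialized along the zero section, so $z_{g-1}^*T=0$; writing $Z_{g-1}=(z_{g-1})_*[\calA_{g-1}]$ and applying the projection formula for the closed embedding $z_{g-1}\colon\calA_{g-1}\to\calX_{g-1}$ gives $T\cdot Z_{g-1}=(z_{g-1})_*(z_{g-1}^*T)=0$. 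Hence $T^g=0$, and the proposition follows.

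The argument is short precisely because Theorem~\ref{thm:0g} is available; the one point that requires care is the bookkeeping of degrees, namely that the zero section is represented by $T^{g-1}$ in codimension $g-1$, while the vanishing we need lies one degree higher, in codimension $g$, and is produced entirely by the triviality of $T$ along the section rather than by any dimension count. I would also emphasize that the apparent family of relations indexed by $n$ collapses to the single geometric fact $T_1^g=0$, the integer parameter being merely an artifact of transporting this fact around the orbit of the shift automorphism; this is what makes (\ref{genrel}) the natural basic relation feeding into Theorem~\ref{thm:subring}, since $(s^n)^*$ carries $T_1$ through exactly the quadratic-in-$n$ family $T_1+nP+n^2T_2$.
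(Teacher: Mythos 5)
Your proposal is correct and follows essentially the same route as the paper: reduce the family of relations to $T_1^g=0$ via the iterated shift automorphism $(s^*)^n(T_1)=T_1+nP+n^2T_2$, and deduce $T^g=0$ on $\calX_{g-1}$ from Theorem~\ref{thm:0g} together with the triviality of $T$ along the zero section. The only cosmetic differences are the order of the two steps and your explicit use of the projection formula to justify $T\cdot Z_{g-1}=0$, which the paper leaves implicit.
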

\begin{proof}
The class $T_1$ is the pullback of the universal theta divisor $T$ on $\calX_{g-1}$. According to Theorem~\ref{thm:0g}, $T^{g-1}=(g-1)!Z_{g-1}\in A^{g-1}(\calX_{g-1})$. Multiplying both sides of this equality by $T$ and recalling that $T$ is trivial along the zero section, so that $TZ_{g-1}=0$, we see that $T^g$ is zero in the Chow ring. Pulling back this relation from $\calX_{g-1}$ to $\calX_{g-1}^2$ under $pr_1^*$ we get that $T_1^g=0$ in $A^{g}(\calX_{g-1}^2)$.

We now apply the shift operator to this relation. A direct calculation using (\ref{eq:shift}) shows that $(s^*)^n(T_1)=T_1+nP+n^2T_2$. The shift operator $s^*$ is an automorphism of $A^*(\calX_{g-1}^2)$, so the result follows.
\end{proof}
\begin{proof}[Proof of Theorem~\ref{thm:subring}]
The proof is direct and computational. We prove the statements of the theorem in the following order. First, we introduce a second grading that distinguishes the divisors $T_1$, $P$ and $T_2$. We then prove the vanishing of $R^k$ for $k>2g-2$ and show that multiplication by $(T_1T_2)^k$ is a surjective map from $R^{g-1-k}$ to $R^{g-1+k}$. Then we use a pushforward calculation to prove that $R^{2g-2}\cong \QQ$. We then show that the relations (\ref{genrel}) generate the ideal of relations, and that multiplication by $(T_1T_2)^k$ is also injective. Finally, we prove the perfect pairing statement.

\subsection{Second grading}
We define a second grading $d$ on $R$ by setting
$$
d(T_1^aP^bT_2^c):=a-c.
$$
This grading is motivated by the fact that all the summands of the $n^{g-k}$ term of relation (\ref{genrel}) have degree $k$ in this grading.
We now
consider the decomposition of $R$ with respect to $d$ and the usual degree:
$$
R=\displaystyle\bigoplus_{k=0}^{\infty}R^k=\displaystyle\bigoplus_{k=0}^{\infty}\displaystyle\bigoplus_{l=-k}^k R^k_l,\quad R^k_l=\left\{L\in R|\deg(L)=k,\,d(L)=l\right\}.
$$

We consider relation (\ref{genrel}) as a polynomial of degree $2g$ in a variable $n$. The coefficients of this polynomial give $2g+1$ relations in $R^g$, one in each $R^g_l$:
\begin{equation} \label{eq:positivegzero}
\displaystyle\sum_{m=0}^{\left\lfloor(g-l)/2\right\rfloor}T_1^{l+m}P^{g-l-2m}T_2^m
\frac{g!}{(l+m)!(g-l-2m)!m!}=0\in R^g_{l},\,\,0\leq l\leq g,
\end{equation}
\begin{equation}
\displaystyle\sum_{m=0}^{\left\lfloor(g+l)/2\right\rfloor}T_1^m P^{g+l-2m}T_2^{m-l}\frac{g!}{m!(g+l-2m)!(m-l)!}=0\in R^g_{l}\mbox,\,-g\leq l< 0.
\label{eq:negativegzero}
\end{equation}

\subsection{Vanishing for $R_l^k$} We first show that $R_l^k$ vanishes for $|l|\geq 2g-k-1$. Indeed, relation (\ref{eq:positivegzero}) for $l=g-1$ implies that $T_1^{g-2}P=0$. Multiplying (\ref{eq:positivegzero}) by powers of $T_1$, we obtain by induction on $p$ that
$$
T_1^{g-1-p}P^{2p+1}=0\mbox{ for }0\leq p\leq g-1.
$$
Now suppose that $T_1^aP^bT_2^c$ is an element of $R_k^l$ for $l\geq 2g-k-1$. Comparing the inequalities, we see that either $a\geq g$, or $b\geq 2g$, or $T_1^aP^bT_2^c$ is a multiple of $T_1^{g-1-l}P^{2l+1}$ for $l=\left\lfloor (b-1)/2\right\rfloor$. In all cases this term is zero, hence $R^k_l=0$ for $l\geq 2g-k-1$. A similar proof shows that $R^k_l=0$ for $l\leq -2g+k+1$.

It now follows that $R^k=0$ for $k>2g-2$, because for any integer $l$ we have that $|l|\geq  0\geq 2g-k-1$, and hence all graded components $R_l^k=0$ vanish.

\subsection{Surjectivity of multiplication by $(T_1T_2)^k$} We next show that every element of $R^{g-1+k}$ can be written as a multiple of $(T_1T_2)^k$. We have already seen above that $R^{g-1+k}_l=0$ for $|l|>g-1-k$. Now suppose $l\leq g-1-k$, and assume without loss of generality that $l\geq 0$. Let $T_1^aP^bT_2^c\in R^{g-1+k}_l$, so that $a+b+c=g-1+k$ and $a-c=l$. If $c\geq k$ then $a=c+l\geq k$, so $T_1^aP^bT_2^c$ is a multiple of $(T_1T_2)^k$. If $c<k$, then $a+b\geq g$, and we can use relation (\ref{eq:positivegzero}) to express $T_1^aP^b$ as a multiple of $T_1T_2$. Repeating this procedure if necessary, we can raise the exponent of $T_2$ to $k$ and write $T_1^aP^bT_2^c$ as a multiple of $(T_1T_2)^k$, which proves the surjectivity of multiplication by $(T_1T_2)^k$.

\subsection{$R^{2g-2}$ has dimension one} We now show that $R^{2g-2}\cong \QQ$. We have already seen that $R^{2g-2}_l=0$ for $l\neq 0$, and that every term in $R^{2g-2}_0$ is a multiple $(T_1T_2)^{g-1}$. Hence $R^{2g-2}$ is at most one-dimensional. The pushforwards of the classes in $R^{2g-2}_0$ to $A^*(\calA_{g-1})$ were computed in \cite[Theorem 7.1]{ergrhu2}:
\begin{equation}\label{eq:pushforward}
h_*(T_1^{g-1-a}P^{2a}T_2^{g-1-a})=(-1)^a\frac{(g-1)!(2a)!(g-1-a)!}{a!}[\calA_{g-1}].
\end{equation}
Therefore, all of the classes $T_1^{g-1-a}P^{2a}T_2^{g-1-a}$ are non-zero, and so $R^{2g-2}$ has dimension one.

\subsection{Linear independence} We have shown that $R^k$ is spanned as a vector space by monomials that are multiples of $(T_1T_2)^{k-g+1}$. We now show that these monomials are linearly independent, by induction on $k$ from $k=2g-2$ down to $k=g-1$. This will prove both that multiplication by $(T_1T_2)^{k-g+1}$ is an isomorphism from $R^{2g-2-k}$ to $R^k$ for $k\geq g$, and that there are no relations in degree less than $g$.

The base case, namely that $T_1^{g-1}T_2^{g-1}$ is non-zero, was established above. Now suppose that we have a linear relation in $R^k$ for some $g-1\leq k<2g-2$. We split it up according to the $d$-grading:
\begin{equation}
(T_1T_2)^{k-g+1}\cdot (X_++X_0+X_-)=0,
\label{eq:linearindependence}
\end{equation}
where $X_+$, $X_0$ and $X_-$ denote the sum of monomials with positive, zero and negative $d$-grading, respectively. We multiply this relation by $T_2$. We know that $X_+=T_1Y_1$, where $Y_1$ only consists of monomials with non-negative $d$-grading. Using (\ref{eq:negativegzero}) we can express $(T_1T_2)^{k-g+1}(X_0+X_-)=(T_1T_2)^{k-g+2}Y_2$, where $Y_2$ only consists of monomials with negative $d$-grading. By induction, monomials that are multiples of $(T_1T_2)^{k-g+2}$ are linearly independent in $R^{k+1}$, hence (\ref{eq:linearindependence}) implies that $Y_1+Y_2=0$. Since these two terms have distinct $d$-grades, it follows that $Y_1=0$, and hence $X_+=0$.  Similarly, multiplying by $T_1$ shows that $X_-=0$.

It remains to show that all $X_0=0$, i.e. that there are no non-trivial relations in $R^k_0$. The reasoning is similar. If $k$ is odd, we multiply relation (\ref{eq:linearindependence}) by $P$ to obtain a relation in $R^{k+1}_0$. Using relation (\ref{eq:positivegzero}) to express each term as a multiple of $(T_1T_2)^{k-g+2}$ and induction, we see that $X_0=0$. If $k$ is even, then $R^k_0$ has dimension one greater than $R^{k+1}_0$, and it is also necessary to multiply relation (\ref{eq:linearindependence}) by $T_1$ (or $T_2$) and use the induction hypothesis for $R^{k+1}_1$.

We have shown monomials that are multiples of $(T_1T_2)^{k-g+1}$ form a basis for $R^k$ for $g-1\leq k\leq 2g-2$. This proves that the multiplication by $(T_1T_2)^{k-g+1}$ map from $R^{2g-2-k}$ to $R^k$ is an isomorphism, and that there are no other relations in the ring $R$. In particular, we have shown that there are no relations in $R$ in degree less than $g$.

\subsection{Perfect pairing}

Finally, we need to show that the product map defines a perfect pairing
$$
R^{k}\times R^{2g-2-k}\rightarrow R^{2g-2}\simeq \QQ.
$$
We first split up by $d$-grading. We first note that $R^k_{l}=R^{k-l}_0\cdot T_1^l$ and $R^{2g-2-k}_{-l}=R^{2g-2-k-l}_0\cdot T_2^l$, and that $R^k_0=R^{k-1}_0\cdot P$ for $k$ odd. Hence, it is sufficient to prove that $R^k_0\times R^{2g-2-k}_0\rightarrow R^{2g-2}_0$ is a perfect pairing for $1\leq k\leq g-1$.

We prove this by induction on $k$. For $k=1$, suppose that $X=aT_1^{g-2}T_2^{g-2}+bT_1^{g-3}P^2T_2^{g-3}$ in $R^{2g-4}$ pairs to zero with $R_0^2$. Multiplying $X$ by $T_1T_2$ and $P^2$ and taking the pushforward to the base using (\ref{eq:pushforward}), we see that $a=b=0$.

Similarly, suppose that the element $X\in R^{2g-2-2k}_0$ pairs to zero with $R^{2k}_0$. Then the elements $X T_1T_2$ and $XP^2$ in $R^{2g-2k}_0$ kill $R^{2k-2}_0$, so by induction they are zero. Using relations (\ref{eq:positivegzero}) and the linear independence of multiples of $(T_1T_2)^{g+1-2k}$ in $R^{2g-2k}_0$ we see that $X=0$, which proves the perfect pairing statement.
\end{proof}

\section{Shift-invariant classes}\label{sec:shift}

Our goal is to compute the restriction of the zero section of the universal semiabelian variety to the boundary $Y$ in terms of products of pullbacks of geometric cycles defined on the whole family $\calX_g'$. The boundary $Y$ is not a normal stack, so the Chow group $A^*(Y)$ does not have an intersection product. To avoid this difficulty we instead work in the Chow ring $A^*(\widetilde{Y})$, where $\widetilde{Y}$ is a 2-to-1 cover of the normalization of the boundary. For this reason, we need to determine which cycles in $A^*(\widetilde{Y})$ are pullbacks of cycles from $A^*(Y)$, and in particular pullbacks of intersections of cycles on $\calX_g'$ with $Y$. We denote by $(\cdot)|_{\widetilde{Y}}:A^*(\calX_g')\to A^*(\widetilde{Y})$ the composition of the restriction to $Y$ with the pullback to $\widetilde Y$.

The Chow ring $A^*(\widetilde{Y})$ is generated over the Chow ring $A^*(\calX_{g-1}^2)$ by the class $\xi$ of the zero section satisfying the relation $\xi^2-\xi P=0$. In the previous section we determined the subring $R$ of $A^*(\calX_{g-1}^2)$ generated by the classes $T_1$, $P$ and $T_2$. In this section, we describe the classes in $\widetilde{R}\subset A^*(\widetilde{Y})$ that are pullbacks of classes from $Y$, where $\widetilde{R}=R[\xi]/(\xi^2-\xi P)$ denotes the subring of $A^*(\widetilde{Y})$ generated by $T_1$, $P$, $T_2$ and $\xi$. By abuse of notation, we will also use $T_1$, $P$ and $T_2$ to denote the pullbacks of these classes to $A^1(\widetilde{Y})$.

The boundary $Y$ is the quotient by the involution $j$ of the $\PP^1$-bundle $\widetilde{Y}$ over $\calX_{g-1}^2$, with the zero section $\Delta_0$ glued to the infinity section $\Delta_{\infty}$ by a shift, resulting in the locus $\Delta\subset Y$. The two sections $\Delta_0$ and $\Delta_{\infty}$ define pullback maps $(\cdot)|_0$ and $(\cdot)|_{\infty}$ from $A^*(\widetilde{Y})$ to $A^*(\calX_{g-1}^2)$. By definition $\xi$ is the class of the zero section $\Delta_0$, hence
$$
\xi\cdot \Delta_0=\xi^2=\xi\cdot P=P\cdot \Delta_0.
$$
Therefore, the map $(\cdot)|_0:A^*(\widetilde{Y})\rightarrow A^*(\calX_{g-1}^2)$ consists in setting $\xi=P$. Similarly, the class of the infinity section $\Delta_{\infty}$ is $\xi-P$, hence
$$
\xi\cdot \Delta_{\infty}=\xi(\xi-P)=0,
$$
and the map $(\cdot)|_{\infty}:A^*(\widetilde{Y})\rightarrow A^*(\calX_{g-1}^2)$ consists in setting $\xi=0$.

Given a subvariety $V\subset Y$, the preimage of $V\cap\Delta$ in $\widetilde Y$ consists of two connected components, namely the preimages of $V$ in $\widetilde{Y}$ intersected with $\Delta_0$ and $\Delta_\infty$. Therefore, a class $X\in A^*(\widetilde{Y})$ is the pullback of a class from $A^*(Y)$ only if it is {\it shift-invariant}, in other words only if
\begin{equation}
s^*(X|_{\Delta_\infty})=X|_{\Delta_0},
\label{eq:shift-invariant1}
\end{equation}
where the above equality is in $A^*(\calX_{g-1}^2)$.

We also recall from \cite[Sec.~4]{grhu2} and from the discussion in Section 2 that the action of the involution $j$ on the semiabelic fibers of the universal family induces the following action on the Picard group:
$$
j^*\xi=\xi-P,\quad j^*P=-P,\quad j^* T_1=T_1,\quad j^* T_2=T_2.
$$

We now describe the shift-invariant and $j$-invariant classes.
\begin{prop}\label{prop:pullback}
Let $\widetilde{R}$ denote the ring $\QQ[\xi,T_1,P,T_2]/I$, where $I$ is the ideal generated by $\xi^2-\xi P$ and relations (\ref{genrel}). Let $j:\widetilde{R}\to\widetilde{R}$ denote the automorphism defined on the generators by
$$
j(\xi)=\xi-P,\quad j(P)=-P,\quad j(T_1)=T_1,\quad j(T_2)=T_2,
$$
and let $s$ be the shift operator defined on the subring generated by $T_1$, $P$ and $T_2$ as follows:
$$
s(T_1)=T_1+P+T_2,\quad s(P)=P+2T_2,\quad s(T_2)=T_2.
$$
Then the subset of elements $X\in\widetilde{R}$ that are $j$-invariant and that are shift-invariant:
$$
j(X)=X,\quad s(X(0,T_1,P,T_2))=X(P,T_1,P,T_2)
$$
is the subring generated by the classes $\Theta:=\xi+T_1-P/2$, $D:=-2T_2$, and $-4\xi T_2-P^2+2P T_2$.
\end{prop}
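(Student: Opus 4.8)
The plan is to exploit the free module structure $\widetilde R=R\oplus\xi R$ coming from $\xi^2=\xi P$, writing every class uniquely as $X=A+\xi B$ with $A,B\in R$, and to translate the two invariance conditions into conditions on the pair $(A,B)$. Since $j(\xi)=\xi-P$ and $j$ acts on $R$ by $P\mapsto -P$ (call this involution $j_R$), comparing the coefficients of $1$ and $\xi$ in $j(X)=X$ shows that $B$ is even in $P$ and that $j_R(A)-A=PB$. On the other hand $X|_{\xi=0}=A$ and $X|_{\xi=P}=A+PB$, so shift-invariance $s(X|_{\xi=0})=X|_{\xi=P}$ reads $s(A)-A=PB$. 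Subtracting, the two conditions together force $s(A)=j_R(A)$, i.e. $\sigma(A)=A$ where $\sigma:=j_R\circ s$, together with the requirement that $B$ be an even class satisfying $PB=s(A)-A$ (which is determined by $A$ only up to $K:=\op{Ann}_R(P)\cap R^{\even}$).

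The core of the argument is to compute the fixed subring $R^{\sigma}$. A direct calculation gives, on generators, $\sigma(T_1)=T_1-P+T_2$, $\sigma(P)=-P+2T_2$, $\sigma(T_2)=T_2$, and $\sigma^2=\op{id}$, so $\sigma$ is an involution. On the degree-one piece $\langle T_1,P,T_2\rangle$ it is a linear involution whose $(+1)$-eigenspace is $\langle\theta,T_2\rangle$, with $\theta:=T_1-P/2$, and whose $(-1)$-eigenspace is spanned by $v:=P-T_2$. Writing the polynomial ring as $\QQ[T_1,P,T_2]=\QQ[\theta,T_2,v]$ in these coordinates, $\sigma$ simply negates $v$ and fixes $\theta,T_2$, so its invariants are $\QQ[\theta,T_2,v^2]$; since $v^2=T_2^2-w$ with $w:=-P^2+2PT_2$, this equals $\QQ[\theta,-2T_2,w]$. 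Because the ideal of relations (\ref{genrel}) is stable under both $j_R$ (which permutes the relations by $n\mapsto -n$) and $s$ (by $n\mapsto n+1$), it is $\sigma$-stable, and the Reynolds operator shows $R^{\sigma}=\QQ[\theta,d,w]$ with $d:=-2T_2$. Finally $\theta,d,w$ are exactly the restrictions to $\Delta_\infty$ (the substitution $\xi=0$) of $\Theta$, $D$ and $W:=-4\xi T_2-P^2+2PT_2$.

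To assemble the result, write $\Phi\colon X\mapsto X|_{\xi=0}$ for restriction to $\Delta_\infty$. One checks directly that $\Theta,D,W$ are each shift- and $j$-invariant, so $\QQ[\Theta,D,W]$ lies in the subring of joint invariants; and by the first step $\Phi$ maps the joint-invariant subring onto $R^{\sigma}=\QQ[\theta,d,w]$, as does $\Phi$ restricted to $\QQ[\Theta,D,W]$. Hence the two subrings coincide precisely when $\ker\Phi$ meets them in the same space, i.e. when $\xi K\subseteq\QQ[\Theta,D,W]$, and I expect this last inclusion to be the main obstacle. The mechanism is that every relation among $\theta,d,w$ in $R^{\sigma}$ is induced by (\ref{genrel}) and lifts to an element $p(\Theta,D,W)\in\ker\Phi\cap\QQ[\Theta,D,W]\subseteq\xi K$; the point is to show that these lifts exhaust $\xi K$. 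Here the Gorenstein structure of $R$ from Theorem~\ref{thm:subring} enters, controlling $K=\op{Ann}_R(P)\cap R^{\even}$ (its complementary quotient being $R/PR\cong\QQ[T_1,T_2]/(T_1,T_2)^{g}$). The key observation I would use is that on $\xi K$ multiplication by $\Theta$ and $D$ acts as multiplication by $T_1$ and $-2T_2$, while $W$ acts as $0$: for $B\in K$ one has $\xi B=uB$ and $u^2B=(P^2/4)B=0$, where $u:=\xi-P/2$. Thus $\xi K$ is a module over $\QQ[T_1,T_2]\cong\QQ[\Theta,D]$, and it suffices to realize $\xi$ times a finite set of module generators of $K$ as polynomials in $\Theta,D,W$ — modelled on the genus-two identity $\xi T_1=\frac{1}{2}\Theta^2-\frac{1}{12}\Theta D+\frac{1}{24}W$, which exhibits a socle-supported kernel class inside $\QQ[\Theta,D,W]$.
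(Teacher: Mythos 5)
Your preparatory steps are all correct: the decomposition $\widetilde R=R\oplus\xi R$, the translation of the two invariances into $\sigma(A)=A$ (for $\sigma=j_R\circ s$) together with $PB=s(A)-A$ with $B$ even, the Reynolds-operator computation $R^{\sigma}=\QQ[\theta,d,w]$, and the resulting identity $\calJ=\QQ[\Theta,D,W]+\xi K$ for the ring $\calJ$ of joint invariants, where $K=\op{Ann}_R(P)\cap R^{\even}$; your genus-two identity also checks out. But the argument stops exactly where the content of the proposition lies: the inclusion $\xi K\subseteq\QQ[\Theta,D,W]$ is never proved, and by your own (correct) reduction it is \emph{equivalent} to the proposition. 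So what you have is a reformulation, not a proof. Nor is the missing step routine: $K\neq 0$ (for instance $T_1^{g-1}\in K$, since $T_1^{g-1}P=0$ by Proposition~\ref{prop:rel}), and $\xi T_1^{g-1}$ is, up to the factor $(g-1)!$, precisely the boundary restriction of the zero section; expressing it as a polynomial in $\Theta$, $D$, $\Delta$ is the computation that occupies Section~\ref{sec:main}. Completing your plan would require identifying $\QQ[T_1,T_2]$-module generators of $K$ (which needs the structure theory of Theorem~\ref{thm:subring}) and then producing and verifying an identity $\xi B=p_B(\Theta,D,W)$ for each generator --- none of which is done.

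The paper closes this hole by a structurally different device, and the difference is worth internalizing. You project a joint invariant to its $\xi$-free part $A=X|_{\xi=0}$ and work with an involution living on the quotient $R$; all information about the $\xi$-component is then lost, and the kernel $\xi K$ is exactly the price. The paper instead first averages over $j$ to write every $j$-invariant class as $r(f)$ with $f$ in the subring $S=\QQ[\Theta,\mu,T_2]$ of the polynomial ring, where $\mu=\xi-P/2$, so that the $\xi$-direction survives; it then defines the involution $\sigma$ on $S$ itself, by $\sigma(\Theta)=\Theta$, $\sigma(\mu)=-\mu-T_2$, $\sigma(T_2)=T_2$, shows that for classes in $\widetilde R^{j}$ the gluing condition is equivalent to $\sigma$-invariance, and averages a second time: every joint invariant equals $r((f+\sigma(f))/2)$ with $(f+\sigma(f))/2\in S^{\sigma}=\QQ[\Theta,T_2,\mu\sigma(\mu)]$, where $r(\mu\sigma(\mu))=\frac{1}{4}(-4\xi T_2-P^2+2PT_2)$ is a quarter of your $W$. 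This exhibits the polynomial expression for \emph{every} joint invariant at once --- in particular for the classes $\xi B$, $B\in K$, which your approach cannot reach --- with no residual kernel left to analyze. If you want to repair your argument along its own lines, the fix is the paper's: replace the pair (restriction to $\xi=0$, involution on $R$) by the pair (lift to $S$, involution on $S$).
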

\begin{rem}
The notation $\Theta$ and $D$ is due to the fact that these are in fact the restrictions of the corresponding classes on $\calX_g'$, according to (\ref{eq:howclassesrestrict}). The next proposition shows that the third class in fact the restriction of $\Delta$.
\end{rem}
\begin{proof}
We first consider the automorphism $j$ on the free polynomial ring $\QQ[\xi,T_1,P,T_2]$. It is clear that $j$ is an involution, and that the $j$-invariant subring is generated by $\xi-P/2$, $P^2$, $T_1$ and $T_2$:
$$
\QQ[\xi,T_1,P,T_2]^{j}=\QQ[\xi-P/2,P^2,T_1,T_2].
$$
Let $r:\QQ[\xi,T_1,P,T_2]\to \widetilde{R}$ denote the projection map. First we note that $j$ preserves the ideal $I$, hence $j$ in fact descends to an involution of $\widetilde{R}$.

Suppose that $X\in \widetilde{R}$ satisfies $j(X)=X$. If $X=r(Y)$, then setting $Z=(Y+j(Y))/2$ we see that $X=r(Z)$ and $j(Z)=Z$. In other words, every $j$-invariant element in $\widetilde{R}$ is the image of a $j$-invariant element in $\QQ[\xi,T_1,P,T_2]$. Since $(\xi-P/2)^2=P^2/4$ in $\widetilde{R}$, we see that $\widetilde{R}^{j}$ is generated by $\xi-P/2$, $T_1$ and $T_2$.

The shift operator $s$ does not extend to the entire ring $\widetilde{R}$, so we cannot compute the subring of shift-invariant classes in the same way, as an invariant subring of the action of a finite group. However, we make the following observation. Let $S\subset\QQ[\xi,T_1,P,T_2]$ denote the subring generated by the classes $\Theta=\xi+T_1-P/2$, $\mu=\xi-P/2$ and $T_2$. We have shown above that $r(S)=\widetilde{R}^{j}$. It turns out that the ring $S$ admits an involution such that the subring of fixed elements is precisely the subring of shift-invariant classes.

Indeed, define an automorphism $\sigma$ of $S$ on the generators as follows:
$$
\sigma(\Theta)=\Theta,\quad \sigma(\mu)=-\mu-T_2,\quad \sigma(T_2)=T_2.
$$
The automorphism $\sigma$ preserves the ideal $S\cap I$ and it is an involution,  therefore $\sigma$ descends to an involution on $\widetilde{R}^{j}$. Moreover,  an element $X\in\widetilde{R}^{j}$ satisfies the gluing condition if and only if it is $\sigma$-invariant. Using the same reasoning as above, we see that subset of elements of $\widetilde{R}^{j}$ satisfying the gluing condition is the image under $r$ of the invariant subring $S^{\sigma}$. The invariant subring $S^{\sigma}$ is generated by $\Theta$, $D$, and the class $\mu\cdot \sigma(\mu)=-\mu(\mu+T_2)=-(\xi T_2+P^2/4-PT_2/2)$, which proves the theorem.
\end{proof}

We now give an interpretation for the third invariant class appearing in Proposition \ref{prop:pullback}:
\begin{prop}\label{prop:deltaclass}
The pullback of $\Delta$, considered as a class in $A^2(\calX_g')$, to $\widetilde{Y}$ is equal to
$$
\Delta|_{\widetilde{Y}}=-4\xi T_2-P^2+2P T_2=(2\xi -P)(-2\xi+P-2T_2)\in A^2(\widetilde Y).
$$
\end{prop}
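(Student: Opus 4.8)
The plan is to compute $\Delta|_{\widetilde Y}$ by excess intersection, the essential point being that although $\Delta$ defines a codimension two class on $\calX_g'$, it is supported on the \emph{singular} (gluing) locus of the boundary divisor $Y=D$, so its restriction to $\widetilde Y$ cannot be read off naively. I would work throughout with the map $f\colon\widetilde Y\to\calX_g'$ realizing $(\cdot)|_{\widetilde Y}$, that is, the composition $\widetilde Y\to Y\hookrightarrow\calX_g'$. Near a generic point of each of the two sections $\Delta_0$ and $\Delta_\infty$ (of classes $\xi$ and $\xi-P$) this map is an analytic local isomorphism onto one of the two transversally meeting branches of the nodal divisor $Y$ along $\Delta$. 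The preimage $f^{-1}(\Delta)$ is the reduced divisor $R:=\Delta_0+\Delta_\infty$, of class $2\xi-P$; since $R$ has codimension one rather than the expected codimension two, $\Delta|_{\widetilde Y}=f^*[\Delta]$ is governed by the excess intersection formula and not by $[R]$ itself.

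First I would compute $c_1(N_f)$ for the normal bundle of the immersion $f$. Because $Y$ is nodal along $\Delta$ with its two sheets meeting transversally, the self-intersection relation acquires a correction supported on the double divisor, giving the line bundle identity $f^*\calO(D)\cong N_f\otimes\calO(R)$, hence
\[
c_1(N_f)=D|_{\widetilde Y}-[R]=-2T_2-(2\xi-P)=-2\xi+P-2T_2,
\]
using $D|_{\widetilde Y}=-2T_2$ from (\ref{eq:howclassesrestrict}). Next I would run the excess intersection formula for $f^*[\Delta]$, using that $\Delta$ is locally the transverse intersection of the two branches, so that $N_{\Delta/\calX_g'}$ splits along $\Delta_0$ as $N_f|_{\Delta_0}\oplus N_{\Delta_0/\widetilde Y}$ with $c_1(N_{\Delta_0/\widetilde Y})=\xi|_{\Delta_0}$. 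Capping the total Chern class of this rank two bundle with the Segre class $s(\Delta_0,\widetilde Y)=\xi-\xi^2+\cdots$ and extracting the codimension two part, the $-\xi^2$ term cancels against the $+\xi^2$ coming from $c_1(N_{\Delta_0/\widetilde Y})$, and the contribution collapses to $c_1(N_f)\cdot\xi$. The symmetric computation on $\Delta_\infty$ gives $c_1(N_f)\cdot(\xi-P)$, so that
\[
\Delta|_{\widetilde Y}=c_1(N_f)\cdot\big([\Delta_0]+[\Delta_\infty]\big)=(-2\xi+P-2T_2)(2\xi-P).
\]

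Finally I would expand this product using $\xi^2=\xi P$, which yields $-4\xi T_2-P^2+2PT_2$ and exhibits the asserted factorization. I expect the excess intersection step to be the main obstacle: the tempting shortcut is to write $\Delta|_{\widetilde Y}=D|_{\widetilde Y}\cdot(2\xi-P)$, treating $\Delta$ as a Cartier divisor in $Y$ pulled back along $D$, but this is wrong precisely because $\Delta$ lies in the non-normal locus of $Y$, and it misses the term $-(2\xi-P)^2=-P^2$. Identifying the double-point divisor $R$ and using the normal bundle $N_f$ of the \emph{immersion} (rather than the normal bundle of the divisor $Y$) is therefore the crux. As a consistency check, the answer is automatically $j$-invariant and shift-invariant, and coincides with the third generator of the invariant subring produced in Proposition~\ref{prop:pullback} --- as it must, since $\Delta|_{\widetilde Y}$ is pulled back from $\calX_g'$.
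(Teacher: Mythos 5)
Your proposal is correct, and it verifies the stated formula, but it proceeds along a genuinely different route than the paper. The paper does no excess-intersection computation at all: it first argues that, since $\Delta$ is locally the transverse intersection of the two branches of $Y$, its pullback to $\widetilde Y$ is a product of divisor classes and hence lies in the subring generated by $\xi$, $T_1$, $P$, $T_2$; it then observes that this pullback satisfies the shift- and $j$-invariance conditions of Proposition~\ref{prop:pullback}, so it must be a linear combination of $\Theta^2$, $\Theta D$, $D^2$ and $-4\xi T_2-P^2+2PT_2$; finally it pins down the coefficients by matching against the restriction to the glued sections, $\Delta|_{\Delta}=P(-P-2T_2)$, which is quoted from \cite[Prop.~4.3]{ergrhu2} (setting $\xi=P$, i.e.\ restricting to the $0$-section, only the last generator is compatible with this). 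Your argument instead computes $f^*[\Delta]$ directly: the double-point identity $f^*\calO(D)\cong N_f\otimes\calO(\Delta_0+\Delta_\infty)$ gives $c_1(N_f)=-2\xi+P-2T_2$, and the excess intersection formula along $f^{-1}(\Delta)=\Delta_0\sqcup\Delta_\infty$, with excess line bundle $N_f$ restricted to each section, yields $\Delta|_{\widetilde Y}=c_1(N_f)\cdot(2\xi-P)$; I checked the identification of the excess bundle (via $N_{\Delta/Y_2}\cong N_{Y_1/\calX_g'}|_{\Delta}$ for transverse branches) and the final expansion using $\xi^2=\xi P$, and both are right, as is your diagnosis that the naive answer $D|_{\widetilde Y}\cdot(2\xi-P)$ misses exactly $-R^2=-P^2$. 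What your route buys: it is self-contained --- it needs neither Proposition~\ref{prop:pullback} nor the external input from \cite{ergrhu2} (indeed, setting $\xi=P$ in your formula re-derives $\Delta|_\Delta=P(-P-2T_2)$), and it makes rigorous the paper's rather informal assertion that ``the class of $\Delta$ is a product of divisors.'' What the paper's route buys: given the invariance machinery already in place and the earlier reference, the verification is three lines, and it stays inside the framework of invariant classes that drives the rest of the paper. The one caveat in your write-up, that $N_f$ is only a genuine line bundle away from the ramification of $\widetilde Y\to Y$ (the fixed locus of $j$), is harmless and at the same level of rigor as the paper itself: that locus is disjoint from $\Delta_0\cup\Delta_\infty$ and has codimension $g$ in $\widetilde Y$, so it affects neither $c_1(N_f)\in A^1(\widetilde Y)$ nor the refined Gysin class, which is supported on the two sections.
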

\begin{proof}
The proof of this formula is a slight extension of the ideas of the proof of \cite[Prop.~4.3]{ergrhu2}, where it is shown that $\Delta|_\Delta=P(-P-2T_2)$. We note that the formula above restricts to this expression when we set $\xi=P$ (which we think of as restricting to the $0$-section), while for $\xi=0$ (the $\infty$-section) the above formula restricts to $-P^2+2PT_2$, which is obtained from $P(-P-2T_2)$ by sending $P$ to $-P$, which we know to be the action of the involution $j$ on $\Pic(\widetilde{Y})$.

To prove the proposition we interpret the class $\Delta$ geometrically. Indeed, recall from \cite{ergrhu2} that $\Delta$ is the locus where $Y$ is not normal, and thus in a small neighborhood of itself $\Delta$ is the intersection of the two local irreducible components of the locus $Y\subset\calX_g'$. Therefore the class of $\Delta$ is a product of divisors, and so lies in the ring generated by $\xi$, $T_1$, $P$ and $T_2$. The class of $\Delta$ also satisfies the conditions of Proposition~\ref{prop:pullback}, hence it is a linear combination of $\Theta^2$, $\Theta D$, $D^2$ and $-4\xi T_2-P^2+2PT_2$. Finally, $\Delta$ restricts to $-P^2+2PT_2$ when we set $\xi=P$, and it is easy to see that $-4\xi T_2-P^2+2PT_2$ is the only class that satisfies this condition.


\end{proof}

\begin{proof}[Proof of Theorem~\ref{thm:expressible}] The result now immediately follows from Proposition~\ref{prop:pullback} and Proposition~\ref{prop:deltaclass}.
\end{proof}

\begin{rem}
In the next section, we show that the restriction of the zero section to the boundary is a polynomial in $\xi$ and $T_1$, and therefore can be expressed as a polynomial in $\Theta$, $D$ and $\Delta$. For now, we note two curious facts.

First, we note that the class $Q:=\Delta-2\Theta D=4T_1T_2-P^2$ does not contain $\xi$, and is therefore in the image of $A^2(\Delta)$ in $A^2(\widetilde Y)$, and the expression for the zero section in terms of the class $Q$ is much simpler than in terms of $\Delta$ (see Theorem~\ref{thm:main}).

Second, we note that one can show that the subring of $R^*$ invariant under gluing (i.e.~under the involution $\sigma$) is generated by $\Theta$, $D$, and $\Delta$, together with one additional class, $\xi(6PT_2+12T_2^2)+P^3-4PT_2^2$, that satisfies a quadratic relation in $\Theta$, $D$, and $\Delta$. We do not know if this class has any geometric meaning.
\end{rem}

\section{Class of the partial boundary of the zero section}\label{sec:main}

We now prove Theorem~\ref{thm:main}, obtaining an explicit expression for the class of the locus of the closure of the zero section in the partial compactification.

Our goal is to extend Theorem~\ref{thm:0g} to the partial compactification. Denote by $z_g':\calA_g'\to\calX_g'$  the closure of the zero section in the partial compactification of the universal family, and denote, as above, by $\T\subset\calX_g'$ the closure of the theta divisor and its class. In \cite{vdgeerchowa3} van der Geer computes the Chow rings of $\overline{\calA}_3$ and $\calX_2'$, and in particular shows that $Z_g'\ne [T^g]/g!$ in $A^2(\calX_2')$. It is easy to deduce that such an equality does not hold in any higher genus either. We now compute the difference.

\smallskip

We describe the locus $z_g'$ explicitly using our description of the geometry of $\calX_g'$, as the universal space of the universal Poincar\'e bundle over the universal fiberwise product $\calX_{g-1}^2=\calX_{g-1}\times_{\calA_{g-1}}\calX_{g-1}$. Indeed, the semiabelic variety of torus rank one is no longer a group, but is acted upon by the semiabelian variety (the $\CC^*$-bundle over the same base $B$), which is a group. The zero for the group law of the semiabelian variety is the point $1\in \CC^*$ lying in the fiber over the zero in the base abelian variety $B$. The zero of the semiabelic variety becomes one of the limits of two-torsion points on it (as described in detail in \cite{grhu2})  --- which one, it does not matter for us, as their classes are all equivalent modulo torsion, and we are working in the Chow ring with rational coefficients. Thus the restriction of $z_g'$ to the boundary $\calX_{g-1}$ of $\calA_g'$ is the map that associates to $(B,b)\in\calX_{g-1}$ the point $(B,0,b,1)\in Y=\partial\calX_g'$. This is of course a section of the universal Poincar\'e bundle restricted to the locus $\lbrace(B,0,b)\rbrace$, and thus its class $\partial Z_g':={Z_g'}|_Y$ is equal to $\xi$ times the class of the locus $\lbrace( B,0,b)\rbrace\subset\calX_{g-1}^2$. However, this class is just the class of the zero section $z_{g-1}:\calA_{g-1}\to\calX_{g-1}$, pulled back to $\calX_{g-1}^2$ under $pr_1$. By Theorem~\ref{thm:0g} discussed above, this is the pullback of the class $T^{g-1}/(g-1)!$ under the projection map $pr_1$, i.e.~the class $T_1^{g-1}/(g-1)!$ in our notation. Therefore, we have proved the following result:

\begin{prop}\label{prop:xiT} The class of the restriction to $\widetilde{Y}$ of the closure of the zero section $Z_g'$ is equal to
$$
\partial Z_g'=\frac{\xi T_1^{g-1}}{(g-1)!}\in A^g(\widetilde Y).
$$
\end{prop}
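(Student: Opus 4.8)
The plan is to determine the restriction $\partial Z_g'=Z_g'|_{\widetilde Y}$ geometrically, by first identifying the limit of the zero section along the boundary and then computing the class of that limit. Over the open part $\calA_g$ the zero section is the identity of the group scheme $\calX_g\to\calA_g$. As one approaches a boundary point $(B,b)\in\calX_{g-1}$, the fiber degenerates to the torus-rank-one semiabelic variety $\overline G$, which is acted on by the semiabelian group $G$ sitting in $1\to\CC^*\to G\to B\to 0$. The identity of $G$ lies over $0\in B$ at the point $1\in\CC^*$, so in the coordinates $(B,\beta,b,x)$ on $\widetilde Y=\PP(\calP\oplus\calO)$ the closure of the zero section meets the boundary in the section $(B,b)\mapsto(B,0,b,1)$. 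First I would justify this identification, using the explicit local description of the degenerating family and of the involution $j$ from \cite{grhu2}; the dimension count shows $z_g'$ is irreducible of dimension $\dim\calA_g$, so $z_g'\cap Y$ has dimension $\dim\calX_{g-1}$, and one checks that this intersection is reduced and equal to the image of this section.

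Once the limit section is identified, I would compute its class. The section lives over the locus $W=\{\beta=0\}=pr_1^{-1}(z_{g-1})\subset\calX_{g-1}^2$, the preimage of the zero section of the first factor, whose class is $pr_1^*Z_{g-1}$. The key observation is that the Poincar\'e bundle $\calP$ is trivialized along the zero section, so $\calP|_W\cong\calO_W$ and hence $\widetilde Y|_W=\PP(\calP\oplus\calO)|_W\cong W\times\PP^1$ is the trivial $\PP^1$-bundle. In a trivial $\PP^1$-bundle the constant sections $x=0$, $x=1$, $x=\infty$ are all rationally equivalent, so the class of the section $x=1$ restricted to $W$ equals $\xi|_W$. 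By the projection formula this globalizes to
$$
\partial Z_g'=\xi\cdot pr_1^*Z_{g-1}\in A^g(\widetilde Y).
$$
Substituting $Z_{g-1}=T^{g-1}/(g-1)!$ from Theorem~\ref{thm:0g} and recalling $T_1=pr_1^*T$ then yields $\partial Z_g'=\xi T_1^{g-1}/(g-1)!$, as claimed.

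The hard part will be the first step: justifying that the closure of the zero section really does meet the boundary in the expected section, with multiplicity one, and tracking how this section pulls back through the gluing and the involution $j$ to $\widetilde Y$. This is a genuine degeneration computation rather than a formal manipulation, and it is exactly where one must invoke the detailed analysis of the semiabelic family. There is also a subtlety that the origin of a ppav does not limit to a canonical point of $\overline G$, but to one of finitely many limits differing by two-torsion. Fortunately this ambiguity is harmless for the class: any such limit is a constant section $x=c$ with $c\in\CC^*$ over $W$, and the triviality of $\calP|_W$ makes all of these rationally equivalent to $\xi|_W$. Thus the class computation is insensitive to the precise limit point, and the second step goes through regardless.
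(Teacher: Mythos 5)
Your proposal is correct and follows essentially the same route as the paper: identify the boundary limit of the zero section as the section $(B,b)\mapsto(B,0,b,1)$ over the locus $\{\beta=0\}=pr_1^{-1}(z_{g-1})$, and compute its class as $\xi\cdot pr_1^*Z_{g-1}=\xi T_1^{g-1}/(g-1)!$ via Theorem~\ref{thm:0g}. The only differences are cosmetic: you spell out the rational equivalence of constant sections using the triviality of $\calP$ along $\{\beta=0\}$, where the paper simply asserts that the class is $\xi$ times the class of that locus, and the paper resolves the $0$-versus-$\infty$ (two-torsion) ambiguity via the consistency relation $T_1^{g-1}P=0$ from Proposition~\ref{prop:rel}.
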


Notice that there is an ambiguity here: we could have as well deduced the same formula with $\xi$ replaced by $\xi+P$, by arguing that the 1-section of the $\PP^1$-bundle is also a section over the $B$ that is the $\infty$-section, instead of the $0$-section, with the corresponding shift. This is consistent, as $T_1^{g-1}P=0\in A^g(\calX_{g-1}^2)$ by Proposition~\ref{prop:rel}. Of course the zero section, being defined geometrically on $Y$, pulls back to a shift-invariant class on the normalization $\widetilde Y$ of $Y$, and Theorem~\ref{thm:expressible} applies to show that $\partial Z_g'$ is a polynomial in the classes $\T$, $D$, and $\Delta$. It remains to compute the coefficients, proving our main result.

\begin{proof}[Proof of the main theorem~\ref{thm:main}]
We first note that the class $\partial Z_g'=\frac{\xi T_1^{g-1}}{(g-1)!}$ satisfies the conditions of Proposition~\ref{prop:pullback} (it is shift-invariant since $T_1^{g-1}P=0$). Therefore, it can be written as a polynomial in $\Theta$, $D$ and $\Delta$. It turns out that the formula for the zero section is simpler in terms of the alternative classes $\Theta-D/8$, $D$, and $\Delta-2\Theta D$.

These three classes also generate the subring of shift-invariant polynomials, therefore there exists a formula
\begin{equation}\label{eq:mainproof}
\frac{\xi T_1^{g-1}}{(g-1)!}= \displaystyle\sum_{a+b+2c=g}\alpha_{a,b,c}(\Theta-D/8)^a D^b(\Delta-2\Theta D)^c,
\end{equation}
where the classes $\T$, $D$ and $\Delta$ are given in terms of $\xi$, $T_1$, $P$ and $T_2$ by
$$
\T=\xi+T_1-\frac{P}{2},\quad D=-2T_2,\quad \Delta=-4\xi T_2-P^2+2PT_2.
$$
We first find the coefficients $\alpha_{a,0,c}$ not involving $D$.

In the main equation (\ref{eq:mainproof}), set $T_2=0$, obtaining
$$
\frac{\xi T_1^{g-1}}{(g-1)!}=\displaystyle\sum_{a+2c=g}\alpha_{a,0,c}\left(\xi+T_1-\frac{P}{2}\right)^a(-P^2)^c.
$$
For an arbitrary integer $n$ we now formally set $T_1=\left(n+\frac{1}{2}\right)P$. Using $\xi^2=\xi P$ we then get
$$
(\xi+nP)^a=n^aP^a+\displaystyle\sum_{i=1}^an^{a-i}\left(\begin{array}{c} a \\ i\end{array}\right)\xi P^{a-1}=n^aP^a+[(n+1)^a-n^a]\xi P^{a-1}.
$$
Therefore, equating the coefficients in front of $\xi P^{g-1}$ on both sides gives
$$
\frac{\left(n+\frac{1}{2}\right)^{g-1}}{(g-1)!}=\displaystyle\sum_{a+2c=g}\alpha_{a,0,c}[(n+1)^a-n^a]
(-1)^c.
$$
We now sum this equality from $n=1$ to $n=N-1$, where $N$ is another integer. The left hand side can be expressed in terms of Bernoulli numbers:
$$
\displaystyle\sum_{n=1}^{N-1}\left(n+\frac{1}{2}\right)^{g-1}=\frac{1}{2^{g-1}}\left[\displaystyle\sum_{k=1}^{2N}k^{g-1}-\displaystyle\sum_{l=1}^N(2l)^{g-1}-1\right]=
$$
$$
=\displaystyle\sum_{m=0}^{g-1}\frac{N^{g-m}B_m}{m!(g-m)!}(2^{1-m}-1)-\frac{1}{2^{g-1}}.
$$
Comparing this with the right hand side and equating coefficients of the powers of $N$ yields
$$
\alpha_{a,0,c}=\frac{(-1)^c}{a!(2c)!}(2^{1-2c}-1)B_{2c},
$$
as claimed by the theorem.

For the coefficients $\alpha_{a,b,c}$ with $b>0$, we do not know an elegant derivation as above. Instead, we show that the remaining coefficients satisfy a triangular system of equations in terms of the coefficients $\alpha_{a,0,c}$, and solve this system directly using Maple.
We consider the main equation (\ref{eq:mainproof}), and set $\xi=0$:
$$
\displaystyle\sum_{a+b+2c=g}\alpha_{a,b,c}\left(T_1-\frac{P}{2}+\frac{T_2}{4}\right)^a (-2T_2)^b(4T_1T_2-P^2)^c=0.
$$
Now formally apply the square root of the shift operator (\ref{eq:shift})
$$
(s^*)^{1/2}(T_1)=T_1+\frac{P}{2}+\frac{T_2}{4},\quad (s^*)^{1/2}(P)=P+T_2,\quad (s^*)^{1/2}(T_2)=T_2,
$$
to this equation. We get that
$$
\displaystyle\sum_{a+b+2c=g}\alpha_{a,b,c}T_1^a (-2T_2)^b(4T_1T_2-P^2)^c=0.
$$
This is a relation in the ring $R^*$, in other words this equation is a linear combination of relations (\ref{eq:positivegzero})-(\ref{eq:negativegzero}). These relations are homogeneous with respect to the grading $d$, as well as the usual grading, so the $d$-homogeneous parts of the above equation vanish separately. The possible values of the grading $d$ are $g-2h$, where $h=0,\ldots,g$, so the above equation splits into the following system:
$$
\displaystyle\sum_{c=0}^{\min(h,g-h)} \alpha_{g-h-c,h-c,c}T_1^{g-h-c}(-2T_2)^{h-c}(4T_1T_2-P^2)^c=0,\quad h=0,\ldots,g.
$$
First, assume that $g-h\geq h$. Expanding $(4T_1T_2-P^2)^c$ and changing the order of summation, we can write the above as
$$
\displaystyle\sum_{l=0}^{h} T_1^{g-h-l}P^{2l}T_2^{h-l}\frac{(-1)^{h+l}2^{h-2l}}{l!}\displaystyle\sum_{c=l}^h
\frac{c!}{(c-l)!}(-1)^c2^c\alpha_{g-h-c,h-c,c}=0.
$$
This equation is satisfied if and only if the left hand side is a multiple of the corresponding relation (\ref{eq:positivegzero}). This gives us a triangular system of equations on the coefficients $\alpha_{g-h-c,h-c,c}$, and we have already determined the coefficient $\alpha_{g-2c,0,c}$ above, so the remaining coefficients are determined uniquely by this system.

Therefore, to prove Theorem~\ref{thm:main} it is sufficient to substitute the coefficients (\ref{eqn:beta}) into the formula above and check that we get relation (\ref{eq:positivegzero}). Substituting and dividing out by a common multiple, we get
$$
\displaystyle\sum_{l=0}^{h} T_1^{g-h-l}P^{2l}T_2^{h-l}\frac{(-1)^l 2^{-2l}}{l!}\displaystyle\sum_{c=l}^h
\frac{(-1)^c 2^{2c}(2g-2c)!}{(g-c)!(c-l)!(g-h-c)!(h-c)!}=0.
$$
Using Maple, we evaluate the inside sum as
$$
\displaystyle\sum_{c=l}^h
\frac{(-1)^c 2^{2c}(2g-2c)!}{(g-c)!(c-l)!(g-h-c)!(h-c)!}=C_{g,h}
\frac{(-1)^l2^{2l}l!}{(g-l-h)!(h-l)!(2l)!},
$$
where $C_{g,h}$ is a coefficient depending on $g$ and $h$. Plugging this into the equation above, we see that it is indeed a multiple of (\ref{eq:positivegzero}), hence it is satisfied. This completes the calculation of the coefficients $\alpha_{g-h-c,h-c,c}$ for $g-h\geq h$, and the calculation of the other coefficients is virtually identical.

Finally, the coefficients $\eta_{a,b,c}$ are obtained by expanding formula $(\ref{eqn:main})$ and using Maple to simplify.
\end{proof}
\begin{rem}\label{rem:goingtoAg}
Given the explicit formula we obtain for the extension of the zero section to the partial compactification, it is natural to ask whether a formula for the extension to the next boundary stratum (over the locus of torus rank two semiabelic varieties) may be possible. This locus of semiabelic varieties of torus rank two is the same for perfect cone, second Voronoi, and central cone (Igusa) toroidal compactifications --- since all these compactifications coincide in genus 2, and restrict inductively to products. In principle it should be possible to describe explicitly the geometry of the universal family of semiabelic varieties of torus rank two (which can now be of two types, depending on whether the normalization is a $\PP^1\times\PP^1$ bundle, or two copies of a $\PP^2$ bundle). This computation would be very involved technically, but could shed further light on the class of the closure of the zero section in $\Perf$, which would be instrumental in trying to inductively describe its cohomology. We note also that the fact that torus rank up to two strata of a toroidal compactification of $\calA_g$ are closely related to the partial compactification of the universal family does not seem to extend deeper, as even the existence of a universal family over $\Perf$ is not known globally.
\end{rem}

\section{Extension of the double ramification cycle}\label{sec:eliashberg}
In this section we extend Hain's formula for the double ramification cycle from $\calM_{g,n}^{ct}$ to $\overline{\calM}_{g,n}^o$, the locus of curves having at most one non-separating node. We recall the setup. Fix a list of integers $\ud=(d_1,\ldots,d_n)$ such that $\sum d_i=0$. The {\it double ramification cycle} $R_\ud\subset\calM_{g,n}$ is defined as the locus of curves $(X,p_1,\ldots,p_n)$ such that the sum $\sum d_i p_i$ is a principal divisor on $X$. The locus $R_\ud$ is very natural from the point of view of Hurwitz theory. This locus, or related loci (see eg. \cite{muller}) also occurs naturally in various enumerative problems, and is also studied in Gromov--Witten theory, see \cite{FSZ} for more references and a discussion.

We approach this locus in the following way. Given $\ud=(d_1,\ldots,d_n)$, denote $s_{\ud}:\calM_{g,n}\rightarrow \calX_g$ the Abel--Jacobi map that associates to the marked curve $(X,p_1,\ldots,p_n)$ the line bundle $\mathcal{O}_X(\sum d_i p_i)$ on $\op{Jac}(X)$ (where in this section we will always think of $\Jac(X)$ as $\Pic^0(X)$). Then $R_\ud$ is the locus in $\calM_{g,n}$ where $s_{\ud}(X,p_1,\ldots,p_n)=0\in\op{Jac}(X)$, so we can compute $R_\ud$ by pulling back the zero section of the universal abelian variety under the map $s_\ud$.

To compute the closure of the double ramification cycle in $\overline{\calM}_{g,n}^o$, we need to understand how the map $s_\ud$ extends to $\overline{\calM}_{g,n}^o$. We first recall the extension of the Abel--Jacobi map to curves of compact type, as recalled in \cite{grza1}, and described in references therein.

Let $(X,p_1,\ldots,p_n)$ be a smooth marked curve of genus $g$. Fix a basis $A_i, B_i$ of $H_1(X,\ZZ)$, and let $\omega_i$ be a basis for $H^0(X,\Omega)$ dual to the cycles $A_i$. Identifying $\op{Jac}(X)$ with $H^0(X,\Omega^1)^{\vee}/H_1(X,\mathbb{Z})$, the Abel--Jacobi map $s_{\ud}$ is given by
\begin{equation}
s_{\ud}\left(X, p_1,\ldots,p_n\right)=\displaystyle\sum_{i=1}^n d_i \left(\displaystyle\int_q^{p_i} \omega_1,\ldots,
\displaystyle\int_q^{p_i}\omega_g\right)\in \op{Jac}(X),
\label{eq:AJ}
\end{equation}
where $q\in X$ is an arbitrary base point. We obtain a description of $s_{\ud}$ on singular curves by considering degenerations of the above formula.

First, suppose that $(X_t,p_1,\ldots,p_n)$ is a family of smooth marked curve degenerating as $t\rightarrow 0$ to a curve of compact type $X=X_0$ having irreducible components $X'$ and $X''$ of genera $h$ and $g-h$, respectively, joined at the points $q'\in X'$ and $q''\in X''$ to form a node. Assume without loss of generality that the points $p_i$ for $i=1,\ldots,k$ and the point $q$, as well as the cycles $A_j$ and $B_j$ for $j=1,\ldots,h$ end up on $X'$, and the remaining points and cycles on $X''$. Denote $e=-(d_1+\cdots+d_k)$. The limit of $\omega_j$ is a normalized $1$-form on one of the components ($X'$ for $j\leq h$ and $X''$ otherwise) and zero on the other. Hence,  for $j=1,\ldots,h$ the limit of $\int_q^{p_i}\omega_j$ is $\int_q^{p_i}\omega_j$ for $i=1,\ldots,k$ and $\int_p^{q'}\omega_j$ for $i=k+1,\ldots,n$, and similarly for $j=h+1,\ldots,g$. We therefore see that
\begin{equation}
s_{\ud}(X,p_1,\ldots,p_n)=(s_{\ud'}(X',p_1,\ldots,p_k,q'),s_{\ud''}(X'',p_{k+1},\ldots,p_n,q'))\in \op{Jac}(X),
\label{eq:AJcompact}
\end{equation}
where we identify $\op{Jac}(X)=\op{Jac}(X')\times \op{Jac}(X'')$, and denote $\ud'=(d_1,\ldots,d_k,e)$ and $\ud''=(d_{k+1},\ldots,d_n,-e)$.

The Abel--Jacobi map for an arbitrary curve of compact type is obtained inductively using the above procedure, by using a sequence of one-parameter families that smooth out one node at a time. The Jacobian of a curve of compact type is the product of the Jacobians of the irreducible components, and the Abel--Jacobi map is a product of the Abel--Jacobi maps on the components, with certain additional weights at the preimages of the nodes.

We now describe the Abel--Jacobi map for curves having one non-separating node. Let $(X,p_1,\ldots,p_n)$ be an irreducible curve having a single node. Let $\widetilde{X}$ be its normalization, and let $q^{\pm}\in \widetilde{X}$ be the preimages of the node. A line bundle of degree zero on $X$ is given by the data $(L,\xi)$ of a degree zero line bundle on $\widetilde{X}$ and a non-zero complex number $\xi\in \mathbb{C}^*$ that defines an isomorphism of the stalks at $q^{\pm}$. The compactified Jacobian $\overline{\op{Jac}}(X)$ is a semi-abelic variety obtained by letting this parameter tend to zero and infinity. Formally, the compactified Jacobian is obtained from the $\mathbb{P}^1$-bundle on $\op{Jac}(\widetilde{X}_0)$ by identifying  the points $(L,0)$ and $(L+\mathcal{O}_{\widetilde{X}}(q^+-q^-),\infty)$ for each $L\in \op{Jac}(\widetilde{X})$.

Now let $(X_t,p_1,\ldots,p_n)$ be a family of smooth marked curves degenerating to $X=X_0$. Choose a basis $A_i, B_i$ of $H_1(X_t,\ZZ)$ such that the degeneration corresponds to contracting the cycle $A_g$, and let $\omega_i$ be the basis for $H^0(X_t,\Omega)$ dual to the $A$-cycles. In the limit $t\rightarrow 0$, the differentials $\omega_1,\ldots,\omega_{g-1}$ degenerate to holomorphic differentials on $\widetilde{X}$ dual to $A_1,\ldots,A_{g-1}$, while the differential $\omega_g$ degenerates to a meromorphic differential on $\widetilde{X}$ having zero $A$-periods and having simple poles with residues $\pm 1/2\pi i$ at $q^{\pm}$. In other words, the limit of the first $g-1$ components of formula (\ref{eq:AJ}) is the Abel--Jacobi map $s_{\ud}$ of the normalization $(\widetilde{X},p_1,\ldots,p_n)$. The limit of the last component is a finite number, because the points $p_i$ are distinct from the $q^{\pm}$. The exponential of this number is the parameter $\xi$ that determines the gluing data of the line bundle on $X$ that is the limit of $s_{\ud}(X_t,p_1,\ldots,p_n)$:
\begin{equation}
\xi=\exp\left(\displaystyle\sum_{i=1}^n d_i \displaystyle\int_q^{p_i} \omega_g\right).
\label{eq:xi}
\end{equation}
In other words, the image of $(X,p_1,\ldots,p_n)$ under the Abel--Jacobi map $s_{\ud}$ is defined as the Abel--Jacobi map on the normalization plus the gluing parameter $\xi$ given by formula (\ref{eq:xi}) above:
\begin{equation}
s_{\ud}(X,p_1,\ldots,p_n)=(s_{\ud}(\widetilde{X},p_1,\ldots,p_n),\xi)\in \op{Jac}(\widetilde{X})\times \mathbb{C}^*\subset \overline{\op{Jac}}(X).
\label{eq:AJirred}
\end{equation}
Note that the parameter $\xi$ is always finite and non-zero, in other words this image always lies in the smooth locus of the compactified Jacobian.

Finally, suppose that $(X,p_1,\ldots,p_n)\in \overline{\calM}_{g,n}^o$ is a stable curve having only one non-separating node. Let this node be $q$, lying on an irreducible component $X_0$, so that the normalization at $q$ is a stable curve $(X',p_1,\ldots,p_n)$ of compact type and of genus $g-1$. The Abel--Jacobi map of $X$ is then equal to
\begin{equation}
s_{\ud}(X,p_1,\ldots,p_n)=(s_{\ud}(X',p_1,\ldots,p_n),\xi)\in \op{Jac}(X')\times \CC^*\subset\overline{\op{Jac}}(X).
\end{equation}
Here $s_{\ud}(X',p_1,\ldots,p_n)$ is the Abel--Jacobi map for a curve of compact type as described above, and the parameter $\xi$ is given by the same formula (\ref{eq:xi}), where, however, each $1$-form $\omega_i$ is non-zero on only one connected component of $X'$, and the path of integration consists only of the part of the path from $q$ to $p_i$ that lies on that component (and hence may be empty). Note that $\xi$ remains finite, hence the image of $s_{\ud}$ lies in the smooth locus of $\overline{\op{Jac}}(X)$.
\begin{proof}[Proof of Theorem~\ref{thm:Mg}] Let $s_\ud$ denote the Abel--Jacobi map $\overline{\calM}_{g,n}^o\to\calX_g'$ described above. The closure of the double ramification cycle $\overline{R}_\ud^o\subset\overline{\calM}_{g,n}^o$ is  the pullback of the zero section $s_\ud^*(z_g')$. The class of the zero section $Z_g'$ is given by a polynomial in $\Theta$, $D$ and $\Delta$ by Theorem~\ref{thm:main}, so to compute the class $[\overline{R}_\ud^o]$ we need to compute the pullbacks of $\Theta$, $D$, and $\Delta$ under $s_\ud$.

The pullback $s_\ud^*\T$ on $\overline{\calM}_{g,n}$ was computed by Hain in \cite{hainnormal}, and an alternative computation of it is one of the main results of \cite{grza1} (note also that a closely related divisor class was computed recently by M\"uller \cite{muller}, and a computation on the moduli space of curves with rational tails was done by Cavalieri, Marcus, and Wise in \cite{cavalieri}). This class is expressed in terms of the standard divisor classes on $\overline{\calM}_{g,n}$ in the following way:
$$
s_\ud^*\T=\frac{1}{2}\displaystyle\sum_{i=1}^n d_i^2 K_i-\frac{1}{2}\displaystyle\sum_{P\subseteq I,|P|\geq 2}
\left(d_P^2-\displaystyle\sum_{i\in P}d_i^2\right)\delta_0^P-\frac{1}{2}
\displaystyle\sum_{h> 0,P\subseteq I} d_P^2
\delta_h^P.
$$
Here $K_i$ denotes the pullback of the relative dualizing sheaf of the universal curve $\overline{\calM}_{g,1}\to \overline{\calM}_g$ under the projection map $\pi_i:\overline{\calM}_{g,n}\to\overline{\calM}_{g,1}$ forgetting all but the $i$-th marked point, $I=\{1,\ldots,n\}$ denotes the indexing set, $d_P=\sum_{i\in P} d_i$, and $\delta_h^P$ denotes the class of the boundary divisor whose generic point is a reducible curve consisting of a smooth genus $h$ component containing the marked points indexed by $P$ joined at a node to a smooth genus $g-h$ component containing the remaining marked points.

Let $\delta_{irr}$ denote the class of the boundary divisor whose generic point is an irreducible curve with a node. The preimage of $D$ is the locus of curves whose Jacobian is a semiabelic variety. Since $D$ is a pullback of the boundary of $\calA_g'$, the map $s_\ud:\delta_{irr}\to D$ factors through a lift of $\overline{\calM}_g^o\to\calA_g'$, and the multiplicity is thus one, so we have $s_\ud^*D=\delta_{irr}$.

Finally, the singular locus of the compactified Jacobian of a curve of geometric genus $g-1$ parameterizes torsion free, rank one, degree zero sheaves that are not line bundles. Equivalently, it is the singular locus of the corresponding semiabelic variety, i.e.~is the locus where this variety is non-normal, the image of the glued 0 and $\infty$ section. We have seen above that the image of $s_\ud$ on $\overline{\calM}_{g,n}^o$ is disjoint from this locus, and thus disjoint from $\Delta$; hence $s_{\ud}^*\Delta=0$, proving the theorem.
\end{proof}

The preimage of $\calA_g'$ under the Torelli map is the locus of all stable curves of geometric genus at least $g-1$, while we have shown above that the map $s_\ud$ extends to curves having at most one non-separating node --- and only computed the double ramification cycle on that locus. In the following examples we show that the Abel--Jacobi map $s_{\ud}$ does not in general extend to curves having two or more non-separating nodes, and thus $\overline{\calM}_{g,n}^o$ is the largest locus on which we can compute the double ramification cycle by pulling back the zero section from $\calX_g'$.

\begin{ex}\label{fails1} Assume that $g\geq 2$, and let $(X,p_1,\ldots,p_n)\in\overline{\calM}_{g,n}$ be the banana curve having two smooth components $X'$ and $X''$ of genera $h>0$ and $g-h-1$, respectively, with the points $q_1^+$ and $q_2^+$ on $X'$ glued respectively to $q_1^-$ and $q_2^-$ on $X''$. Assume for simplicity that the point $p_1$ is on $X'$, while the remaining points are on $X''$. The compactified Jacobian of $X$ is obtained from a $\mathbb{P}^1$-bundle over $\op{Jac}(X')\times \op{Jac}(X'')$ by identifying $(L',L'',0)$ and $(L'+\mathcal{O}_{X'}(q_1^+-q_2^+),L''+\mathcal{O}_{X''}(q_1^--q_2^-),\infty)$ for all $L'\in\op{Jac}(X')$ and $L''\in\op{Jac}(X'')$.

We can construct $X$ as a limit as $t\rightarrow 0$ of a family $X_t$ of irreducible nodal curves with one node, such that $q_1^+=q_1^-$ is the limit of the node of $X_t$, while $q_2^+=q_2^-$ is obtained by collapsing a homologically trivial cycle on the normalization $\widetilde{X}_t$. For each $X_t$, the Abel--Jacobi map is given by formula (\ref{eq:AJirred}), and hence the Abel--Jacobi map $s_{\ud}$ of $X$ should be the limit of (\ref{eq:AJirred}) as $t\rightarrow 0$. The limit of the family $\widetilde{X}_t$ is a curve of compact type with irreducible components $X'$ and $X''$ joined at $q^+_2=q^-_2$ but not at the other node. As $t\rightarrow 0$, the second component $\xi$ of (\ref{eq:AJirred}) may degenerate to infinity, hence the limit may lie in the singular locus of $\overline{\op{Jac}}(X)$. However, the limit of the first component $s_{\ud}(\widetilde{X}_t,p_1,\ldots,p_n)$ is a point on $\op{Jac}(X')\times \op{Jac}(X'')$ determined by formula (\ref{eq:AJcompact}). Hence, we see that $s_{\ud}(X,p_1,\ldots,p_n)$ should be a point on $\overline{\op{Jac}}(X)$ coming from a $\mathbb{P}^1$-fiber lying over the point in $\op{Jac}(X')\times \op{Jac}(X'')$ whose first coordinate is $\mathcal{O}_{X'}(d_1p_1-d_1q_2^+)$.

If we now exchange the roles of the two nodes, we see that $s_{\ud}(X,p_1,\ldots,p_n)$ should be a point on $\overline{\op{Jac}}(X)$ coming from the $\mathbb{P}^1$-fiber lying above $(\mathcal{O}_{X'}(d_1p_1-d_1 q_1^+),*)$. We now see that, unless $|d_1|\leq 1$ or $q_1^+-q_2^+$ is a torsion point of $\op{Jac}(X')$, these two limits cannot correspond to the same point in the compactified Jacobian $\overline{\op{Jac}}(X)$. Hence the Abel--Jacobi map $s_{\ud}(X,p_1,\ldots,p_n)$ is undefined.
\end{ex}

\begin{ex}\label{fails2} Even in genus one, the Abel--Jacobi map does not extend to all of $\overline{\calM}_{1,n}$ for $n\geq 3$. Indeed, let $X$ be a cycle of three rational components $X_1$, $X_2$ and $X_3$, with $p_1\in X_1$, $p_2\in X_2$ and the remaining points on $X_3$. We can obtain $X$ as a limit of a family of rational nodal curves by letting $p_1$ and $p_2$ tend to the node from different directions. The smooth locus of a rational nodal curve is identified with $\mathbb{C}^*$, which is also the Jacobian, and the Abel--Jacobi map $s_{\ud}$ is the product of the coordinates of the marked points $p_i$ raised to the corresponding powers $d_i$. It is easy to see that when $p_1$ approaches zero and $p_2$ approaches infinity, this product depends on the relative rates of approach. Hence $s_{\ud}$ is not defined for the limit curve $X$.
\end{ex}

\begin{rem}
In view of the examples above, computing completely the class of the closure $\overline{R}_\ud\subset\overline{\calM}_{g,n}$ appears to be a problem completely different in nature from the one that we study. Indeed, even if one could compute the class $\delta_g$ on some bigger partial toroidal compactification of $\calA_g$ (see Remark~\ref{rem:goingtoAg} about the difficulties of this), this still would not suffice, as the Abel-Jacobi map does not extend to all of $\overline{\calM}_{g,n}$ as explained in the examples above, and is in fact undefined on a locus of codimension two. Thus it seems impossible to describe $\overline{R}_\ud$ geometrically as a preimage of some locus on some compactification of $\calX_g$, as there is simply no map there, and thus to go beyond the locus $\overline{R}_\ud^o$, one would need to develop completely different methods to study the closure of the double ramification cycle, either by resolving the indeterminacy of the map $s_\ud$ or by describing its boundary points in some other way.
\end{rem}

\section*{Acknowledgments}

We thank Maksym Fedorchuk for discussions on semistable reduction, Richard Hain for useful discussions related to Eliashberg's problem, Klaus Hulek for pointing out the importance of automorphisms of universal families, Robin de Jong for related discussions on normal functions, and Claire Voisin for explanations about the decomposition theorem. We are also very grateful to Gerard van der Geer for comments on the Fourier transform, and for reading a draft version of this text and suggesting numerous valuable improvements.

We thank the referees for many valuable suggestions and comments, that led to various useful corrections and clarifications in the text.


\begin{thebibliography}{vdGM12}
\bibitem[Ale02]{alexeev}
\newblock  V.~Alexeev.
\newblock Complete moduli in the presence of semiabelian group action.
\newblock {\em Ann. of Math.}, 155(3):611--708, 2002.

\bibitem[AB11]{albr}
V.~Alexeev and A.~Brunyate.
\newblock Extending {T}orelli map to toroidal compactifications of {S}iegel
  space.
\newblock {\em Invent. Math.}, 188(1):175--196, 2011.

\bibitem[Be10]{2010Beauville}
A.~Beauville,
\newblock The action of $SL_2$ on abelian varieties,
\newblock {\em J. Ramanujan Math. Soc.}, 25 (2010), no. 3, 253-263.

\bibitem[CMW12]{cavalieri}
R.~Cavalieri, S.~Marcus, and J.~Wise.
\newblock Polynomial families of tautological classes on
  $\mathcal{M}_{g,n}^{rt}$.
\newblock {\em J.~Pure Appl.~Algebra}, 216(4):950--981, 2012.

\bibitem[DM91]{denmur}
C.~Deninger and J.~Murre.
\newblock Motivic decomposition of abelian schemes and the {F}ourier--Mukai transform.
\newblock {\em J. Reine Angew. Math.}, 422:201--219, 1991.

\bibitem[EvdG04]{ekvdgorder}
T.~Ekedahl and G.~van~der Geer.
\newblock The order of the top {C}hern class of the {H}odge bundle on the
  moduli space of abelian varieties.
\newblock {\em Acta Math.}, 192(1):95--109, 2004.

\bibitem[EvdG05]{ekvdgcycles}
T.~Ekedahl and G.~van~der Geer.
\newblock Cycles representing the top {C}hern class of the {H}odge bundle on
  the moduli space of abelian varieties.
\newblock {\em Duke Math. J.}, 129(1):187--199, 2005.

\bibitem[EGH10]{ergrhu2}
C.~Erdenberger, S.~Grushevsky, and K.~Hulek.
\newblock Some intersection numbers of divisors on toroidal compactifications
  of {${\mathcal A}_g$}.
\newblock {\em J. Algebraic Geom.}, 19:99--132, 2010.

\bibitem[EV02]{esvi}
H.~Esnault and E.~Viehweg.
\newblock Chern classes of {G}auss-{M}anin bundles of weight 1 vanish.
\newblock {\em {$K$}-Theory}, 26(3):287--305, 2002.

\bibitem[Fab97]{faberonedim}
C.~Faber.
\newblock A non-vanishing result for the tautological ring of {${\mathcal
  {M}}_g$}.
\newblock 1997.
\newblock preprint arXiv:math/9711219.

\bibitem[Fab99]{faberconjecture}
C.~Faber.
\newblock A conjectural description of the tautological ring of the moduli
  space of curves.
\newblock In {\em Moduli of curves and abelian varieties}, Aspects Math., E33,
  pages 109--129. Vieweg, Braunschweig, 1999.

\bibitem[FP03]{fapalambdag}
C.~Faber and R.~Pandharipande.
\newblock Hodge integrals, partition matrices, and the {$\lambda\sb g$}
  conjecture.
\newblock {\em Ann. of Math. (2)}, 157(1):97--124, 2003.

\bibitem[FP05]{fapabntaut}
C.~Faber and R.~Pandharipande.
\newblock Relative maps and tautological classes.
\newblock {\em J. Eur. Math. Soc. (JEMS)}, 7(1):13--49, 2005.

\bibitem[FSZ10]{FSZ}
C.~Faber, S.~Shadrin, D.~Zvonkine.
\newblock Tautological relations and the r-spin Witten conjecture.
\newblock {\em Ann. Sci. \'Ec. Norm. Sup\'er.}, 43(4):621--658, 2010.

\bibitem[Ful98]{fultonintersection}
W.~Fulton.
\newblock {\em Intersection theory}, volume~2 of {\em Ergebnisse der Mathematik
  und ihrer Grenzgebiete. 3. Folge. A Series of Modern Surveys in Mathematics}.
\newblock Springer-Verlag, Berlin, second edition, 1998.

\bibitem[vdG98]{vdgeerchowa3}
G.~van~der Geer.
\newblock The {C}how ring of the moduli space of abelian threefolds.
\newblock {\em J. Algebraic Geom.}, 7(4):753--770, 1998.

\bibitem[vdG99]{vdgeercycles}
G.~van~der Geer.
\newblock {\em Cycles on the moduli space of abelian varieties}, pages 65--89.
\newblock Aspects Math., E33. Vieweg, Braunschweig, 1999.

\bibitem[vdGM12]{vdgmoo}
G.~van~der Geer and B.~Moonen.
\newblock {\em Abelian Varieties}.
\newblock 2012.
\newblock draft version available at
  \texttt{http://staff.science.uva.nl/\~{}bmoonen/boek/BookAV.html}.

\bibitem[GV05]{grva}
T.~Graber and R.~Vakil.
\newblock Relative virtual localization and vanishing of tautological classes
  on moduli spaces of curves.
\newblock {\em Duke Math. J.}, 130(1):1--37, 2005.

\bibitem[GH11]{grhu2}
S.~Grushevsky and K.~Hulek.
\newblock Principally polarized semiabelic varieties of torus rank up to 3, and
  the {A}ndreotti-{M}ayer loci.
\newblock {\em Pure and Applied Mathematics Quarterly, special issue in memory
  of Eckart Viehweg}, 7:1309--1360, 2011.

\bibitem[GH12]{grhu1}
S.~Grushevsky and K.~Hulek.
\newblock The class of the locus of intermediate jacobians of cubic threefolds.
\newblock {\em Invent. Math.}, 190(1):119--168, 2012.

\bibitem[GL08]{grle}
S.~Grushevsky and D.~Lehavi.
\newblock Some intersections in the {P}oincar\'e bundle and the universal theta
  divisor on {$\overline{{\mathcal A}_ g}$}.
\newblock {\em Int. Math. Res. Not. IMRN}, (1):Art. ID rnm 128, 19, 2008.

\bibitem[GZ12]{grza1}
S.~Grushevsky and D.~Zakharov.
\newblock The double ramification cycle and the theta divisor.
\newblock Preprint arXiv:1206.7001,
\newblock {\em Proc. of the AMS}, to appear.

\bibitem[Hai13]{hainnormal}
R.~Hain.
\newblock Normal Functions and the Geometry of Moduli Spaces of Curves.
\newblock {\it Handbook of Moduli, volume I}, 527--578, edited by Gavril Farkas, Ian Morrison, International Press, 2013.

\bibitem[Ion02]{ionel}
E.-N. Ionel.
\newblock Topological recursive relations in {$H^{2g}({\mathcal M}_{g,n})$}.
\newblock {\em Invent. Math.}, 148(3):627--658, 2002.

\bibitem[KS03]{kesa}
S.~Keel and L.~Sadun.
\newblock Oort's conjecture for {${\mathcal A}_g\otimes{\mathbb C}$}.
\newblock {\em J. Amer. Math. Soc.}, 16(4):887--900, 2003.

\bibitem[Ku93]{1993Kuennemann}
K.~K\"unnemann.
\newblock A Lefschetz decomposition for Chow motives of abelian schemes,
\newblock {\em Invent. Math.}, 113 (1993), no. 1, 85-102.

\bibitem[Loo95]{loo}
E.~Looijenga.
\newblock On the tautological ring of {${\mathcal M}_g$}.
\newblock {\em Invent. Math.}, 121(2):411--419, 1995.

\bibitem[LL97]{lolu}
E.~Looijenga and V.~Lunts.
\newblock A {L}ie algebra attached to a projective variety.
\newblock {\em Invent. Math.}, 129(2):361--412, 1997.

\bibitem[MV12]{mevi}
M.~Melo and F.~Viviani.
\newblock Comparing perfect and 2nd {V}oronoi decompositions: the matroidal
  locus.
\newblock {\em Math. Ann.}, 354(4):1521--1554, 2012.


\bibitem[Moo11]{moonen}
B.~Moonen.
\newblock On the Chow motive of an abelian scheme with non-trivial
  endomorphisms.
\newblock 2011.
\newblock preprint arXiv:1110.4264 (version 2).

\bibitem[M{\"u}l12]{muller}
F.~M{\"u}ller.
\newblock The pullback of a theta divisor to {${\mathcal M}_{g,n}$}.
\newblock 2012.
\newblock preprint arXiv:1203.3102.

\bibitem[Mum83]{mumforddimag}
D.~Mumford.
\newblock On the {K}odaira dimension of the {S}iegel modular variety.
\newblock In {\em Algebraic geometry---open problems ({R}avello, 1982)}, volume
  997 of {\em Lecture Notes in Math.}, pages 348--375, Berlin, 1983. Springer.

\bibitem[Nam80]{namikawabook}
Y.~Namikawa.
\newblock {\em Toroidal compactification of {S}iegel spaces}, volume 812 of
  {\em Lecture Notes in Mathematics}.
\newblock Springer, Berlin, 1980.

\bibitem[PP13]{2013PandharipandePixton}
\newblock R.~Pandharipande, A.~Pixton.
\newblock {\em Relations in the tautological ring of the moduli space of curves.}
\newblock arXiv:1301.4561

\bibitem[PPZ13]{2013PandharipandePixtonZvonkine}
\newblock R.~Pandharipande, A.~Pixton and D.~Zvonkine.
\newblock {\em Relations on $\overline{\mathcal{M}}_{g,n}$ via $3$-spin structures.}
\newblock arXiv:1303.1043

\bibitem[PT12]{2012PetersenTommasi}
\newblock D.~Petersen, O.~Tommasi.
\newblock {\em The Gorenstein conjecture fails for the tautological ring of $\overline{\mathcal{M}}_{2,n}$.}
\newblock arXiv:1210.5761

\bibitem[Po96]{1996Polishchuk}
A.~Polishchuk.
\newblock Biextension, Weil representation on derived categories, and theta functions,
\newblock Ph. D. Thesis, Harvard Univ., 1996.

\bibitem[SB06]{shepherdbarron}
N.~Shepherd-Barron.
\newblock Perfect forms and the moduli space of abelian varieties.
\newblock {\em Invent. Math.}, 163(1):25--45, 2006.

\bibitem[Tho07]{thompson}
G.~Thompson.
\newblock Skew invariant theory of symplectic groups, pluri-{H}odge groups and
  3-manifold invariants.
\newblock {\em Int. Math. Res. Not. IMRN}, (15):Art. ID rnm048, 32, 2007.

\bibitem[Voi12a]{voisin}
C.~Voisin.
\newblock Chow rings and decomposition theorems for {K}3 surfaces and
  {C}alabi-{Y}au hypersurfaces.
\newblock {\em Geom. and Top.}, 16(1):433--473, 2012.

\bibitem[Voi12]{voisinnotes}
C.~Voisin.
\newblock Chow rings, decomposition of the diagonal and the topology of
  families.
\newblock 2012.
\newblock available at
  http://www.math.jussieu.fr/\~{}voisin/Articlesweb/weyllectures.pdf.



\end{thebibliography}
\end{document}